\newcommand\matlab{\textsc{Matlab}}
\DeclareMathOperator{\trace}{trace}
\DeclareMathOperator{\orth}{\texttt{orth}}
\newcommand{\argmax}{\text{arg}\max}
\newcommand{\conj}[1]{\bar{#1}}
\DeclareMathOperator{\im}{Im}
\DeclareMathOperator{\re}{Re}
\DeclareMathOperator{\Lop}{\mathscr{L}}
\DeclareMathOperator{\H2}{\mathcal{H}_2}
\DeclareMathOperator{\vspan}{Span}
\DeclareMathOperator{\vrange}{Range}
\newcommand{\R}{\mathcal{R}}
\newcommand{\Vbase}{V}
\DeclareMathOperator{\Kspace}{\mathcal{K}}
\DeclareMathOperator{\kron}{\otimes}
\newcommand{\RR}{\mathbb{R}}
\newcommand{\tol}{\texttt{tol}}
\journalname{}
\begin{document}

\title{Residual-based iterations for the generalized Lyapunov~equation}

\author{Tobias Breiten \and 
Emil Ringh
}

\institute{Tobias Breiten \at
              Institute for Mathematics and Scientific Computing, Karl-Franzens-Universit{\"a}t\\ 
              Heinrichstrasse 36, 8010 Graz, Austria\\
              \email{tobias.breiten@uni-graz.at}
           \and
           Emil Ringh \at
              Department of Mathematics, KTH Royal Institute of Technology\\
              Lindstedtsv{\"a}gen 25, 100 44 Stockholm, Sweden\\
              \email{eringh@kth.se}
}

\maketitle

\begin{abstract}
This paper treats iterative solution methods to the generalized Lyapunov equation. 
Specifically it expands the existing theoretical justification for the alternating linear scheme (ALS) from the stable Lyapunov equation to the stable generalized Lyapunov equation. 
Moreover, connections between the energy-norm minimization in ALS and the theory to H2-optimality of an associated bilinear control system are established.
It is also shown that a certain ALS-based iteration can be seen as iteratively constructing rank-1 model reduction subspaces for bilinear control systems associated with the residual. 
Similar to the ALS-based iteration, the fixed-point iteration can also be seen as a residual-based method minimizing an upper bound of the associated energy norm. 
Lastly a residual-based generalized rational-Krylov-type subspace is proposed for the generalized Lyapunov equation.

\keywords{Generalized Lyapunov equation  \and H2-optimal model reduction \and Bilinear~control~systems \and Alternating linear scheme \and Projection methods \and Matrix~equations \and Rational Krylov}
\subclass{65F10 \and 58E25 \and 65F30 \and 65F35}

\end{abstract}

\section{Introduction}\label{sect:intro}
%Let $B\in\RR^{n\times r}$ be given and $X\in\RR^{n\times n}$ be unknown. This paper concerns computing approximate solutions to the \textit{generalized Lyapunov equation}
%\begin{align}\label{eq:GenLyap}
%\Lop(X) + \Pi(X) + BB^T = 0,
%\end{align}
%where $\Lop$ is known as the  \emph{Lyapunov operator} and with $A, N_i\in\RR^{n\times n}$ given, the operators $\Lop, \Pi: \RR^{n\times n} \rightarrow \RR^{n\times n}$ are defined as
%\begin{align}
%\Lop(X) := AX + XA^T \label{eq:L}\\
%\Pi(X) := \sum_{i=1}^m N_iXN_i^T. \label{eq:Pi}
%\end{align}
This paper concerns iterative ways to compute approximate solutions to what has become known as  the \emph{generalized Lyapunov equation}
\begin{align}\label{eq:GenLyap}
\Lop(X) + \Pi(X) + BB^T = 0,
\end{align}
where $X\in\RR^{n\times n}$ is unknown, and the operators $\Lop, \Pi: \RR^{n\times n} \rightarrow \RR^{n\times n}$ are defined as
\begin{align}
\Lop(X) := AX + XA^T \label{eq:L}\\
\Pi(X) := \sum_{i=1}^m N_iXN_i^T, \label{eq:Pi}
\end{align}
with $A, N_i\in\RR^{n\times n}$ for $i=1,\dots,m$ given.
The operator $\Lop$ is commonly known as the \emph{Lyapunov operator}, and $\Pi$ is sometimes called a \emph{correction}.
We further assume that $A$ is \emph{stable}, i.e., $A$ has all its eigenvalues in the left-half plane, which implies that $\Lop$ is invertible \cite[Theorem~4.4.6]{Horn:1991:MATAN2}. Moreover, we assumed that $\rho(\Lop^{-1}\Pi)<1$, where $\rho$ denotes the (operator) spectral radius. The assumption on the spectral radius implies that \eqref{eq:GenLyap} has a unique solution, see, e.g., \cite[Theorem~2.1]{jarlebring2017krylov}. Furthermore, the definition of $\Pi$ in \eqref{eq:Pi} implies that it is non-negative, in the sense that $\Pi(X)$ is positive semidefinite when $X$ is positive semidefinite. Thus one can assert that the unique solution $X$ is indeed positive definite, see, e.g., \cite[Theorem~3.9]{DammBenner} or \cite[Theorem~4.1]{DammDirectADI}.

\subsection{Related work}
The standard Lyapunov equation, $AX+XA^T+BB^T = 0$, has been well studied for a long time and considerable research effort has been, and is still, put into finding efficient algorithms for computing the solution.
Some examples of methods are the classical  Bartels-Stewart algorithm \cite{Bartels1972} for small and dense problems based on factorizing the matrix $A$ and backward substitution, the Smith method presented in \cite{Smith:1968:Matrix}, and the Riemannian optimization method \cite{vandereycken2010riemannian} which computes a low-rank approximation by minimizing an associated cost function over the manifold of rank-$k$ matrices, where $k\ll n$.
The Lyapunov equation has a lose connection to control theoretic methods,
% are developed using this connection
such as the iterative rational Krylov algorithm (IRKA) \cite{Gugercin2008,Flagg:2012:Convergence} which computes locally $\H2$-optimal reduced order systems.
Related research is presented in a series of paper \cite{Druskin2009,Druskin2010,Druskin.Simoncini.11}, where Druskin and co-authors develop a strategy to choose shifts for the rational Krylov subspace for efficient subspace reduction when solving PDEs \cite{Druskin2009,Druskin2010}, as well as for model reduction of linear single-input-single-output (SISO) systems and solutions to Lyapunov equations \cite{Druskin.Simoncini.11}. Instead of computing full spaces iteratively with a method such as IRKA, the idea is to construct an infinite sequence with asymptotically optimal convergence speed \cite{Druskin2009}. Then the subspace can be dynamically extended as needed, until required precision is achieved, rather than starting the process by deciding the dimension of the space, an a priori parameter choice with (complex) implications of the final approximation accuracy. The idea is also further developed by using tangential directions, proving especially useful for situations where the right-hand side is not of particularly low rank \cite{Druskin:2014:AdaptiveTangetial}, e.g., multiple-input-multiple-output (MIMO) systems.
For a more complete overview of results and techniques for Lyapunov equations, see, e.g., the review article \cite{Simoncini:2016:Computational}.

The generalized Lyapunov equation \eqref{eq:GenLyap} has received increased attention over the last decade. Results on low-rank approximability has emerged \cite[Theorem~1]{Benner:2013:low}\cite[Theorem~2]{jarlebring2017krylov}, i.e., similarly to the standard Lyapunov equation one can in certain cases when the right-hand-side $B$ is of low rank, $r\ll n$, expect the singular values of the solution to decay rapidly even for the generalized Lyapunov equation. Such low-rank approximability is useful for large-scale problems since algorithms can be adapted to exploit the low-rank format, reducing computational effort and storage requirement. Example of algorithms exploiting low-rank structures are a Bilinear ADI method \cite{Benner:2013:low}, specializations of Krylov methods for matrix equations \cite{jarlebring2017krylov}, as well as greedy low-rank methods \cite{Kressner:2015:Truncated}, and exploitations of the fixed-point iteration \cite{Shank2016}.
Through the connection with bilinear control systems there is an extension of IRKA, known as bilinear iterative rational Krylov (BIRKA) \cite{Benner2012,Flagg2015}.
There are also methods based on Lyapunov and ADI-preconditioned GMRES and BICGSTAB \cite{DammDirectADI}, and in general for problems with tensor product structure \cite{kressner2010krylov}.
We also mention that in the case when the correction $\Pi$ has low operator-rank, typically if all the matrices $N_i$ are of low rank, there is a specialization of the Sherman-Morrison-Woodbury formula to the linear matrix equation, see, e.g. \cite[Section~3]{DammDirectADI}.

\subsection{Outline and summary of contributions}
The paper is outlined as follows: In Section~\ref{sect:prel} we present some existing theory and preliminary results which sets the context of the paper. This is followed up in Section~\ref{sect:H2_connection} where we prove that the alternating linear scheme (ALS) presented by Kressner and Sirkovi{\'c} in \cite{Kressner:2015:Truncated} computes search directions which at each step fulfill a first order necessary condition for being $\H2$-optimal, in a certain sense. Moreover, we show equivalence between ALS and BIRKA. In Section~\ref{sect:fixed-point} we make an analogue to the fixed-point iteration, showing that it minimizes an upper bound of the energy-norm,  before we in Section~\ref{sect:Res_Rat_Kry} present a residual-based generalized rational-Krylov-type subspace adapted for solving the generalized Lyapunov equation. While the main focus of this paper is  on solution methods to the generalized Lyapunov equation, in Section~\ref{sect:lin} we draw parallels with rational Krylov subspaces for the standard Lyapunov equation. We end the paper with some numerical experiments presented in Section~\ref{sect:num}, and conclusions and outlooks in Section~\ref{sect:concl}.

%%%%%%%%%%%%%%%%%%%%%%%%%%%%%%%%%%%%%%%%%%%%%%%%%%%%%%%%%%%%%%%%%%%%%%%%
\section{Preliminaries}\label{sect:prel}
\subsection{Generalized matrix equations and approximations}
In this paper we are concerned with iterative methods for computing approximative solutions to the generalized Lyapunov equation \eqref{eq:GenLyap}. In this section we recall some basic definitions and results that will be used in later in the paper.
Regarding the generalized Lyapunov equation there is one special class of problems where more can be said, and that is the symmetric case.
\begin{definition}[Symmetric generalized Lyapunov equation]\label{def:sym_lyap}
We call equation \eqref{eq:GenLyap} a \emph{symmetric} generalized Lyapunov equation if $A = A^T$ and $N_i=N_i^T$ for $i=1,\dots,m$.
\end{definition}

In general we will think of $\hat X_k\in \RR^{n\times n}$ as an approximation of the solution to \eqref{eq:GenLyap}, where $k$ is an iteration count. The goal is to find an $\hat X_k$ such that $\|X-\hat X_k\|$ is small for some norm, where $X$ is the exact solution to \eqref{eq:GenLyap}. However, in many situations there is no precise definition, and no need for one, of what is meant by approximation. Nevertheless, in order to discuss projection methods and make the results precise, %yet not too technical to state, 
we make the following (standard) definition of a Galerkin approximation.

\begin{definition}[The Galerkin approximation]\label{def:Gal_approx}
Let $\Kspace_k\subseteq\RR^n$ be an $n_k\leq n$ dimensional subspace for $k=0,1,\dots$, and let $\Vbase_k\in\RR^{n\times n_k}$ be a matrix containing an orthogonal basis of $\Kspace_k$.
We call $\hat X_k$ the \emph{Galerkin approximation} to \eqref{eq:GenLyap}, in $\Kspace_k$, if $\hat X_k = \Vbase_k Y_k \Vbase_k^T$ and $Y_k$ is determined by the condition
\begin{align*}
%\Vbase_k^T\Lop(\hat X_k)\Vbase_k + \Vbase_k^T\Pi(\hat X_k)\Vbase_k  + (\Vbase_k^TB)(B^T\Vbase_k) = 0.
\Vbase_k^T\left(\Lop(\hat X_k) + \Pi(\hat X_k)  + BB^T\right)\Vbase_k = 0,
\end{align*}
known as the \emph{projected problem}.
\end{definition}
It is possible that the projected problem does not have a solution, or that the solution is not unique, and in such case the definition of (the) Galerkin approximation in Definition~\ref{def:Gal_approx} is nonsensical. For the generalized Lyapunov equation there are certain sufficient conditions for the Galerkin approximation to exist and be unique, e.g., the criteria in \cite[Theorem~3.9]{DammBenner}\cite[Theorem~4.1]{DammDirectADI} or \cite[Proposition~3.2]{jarlebring2017krylov}.
In fact, for the symmetric generalized Lyapunov equation with our assumptions on the spectral radius, it is possible to assure the existence of the Galerkin approximation. Since the property \cite[Theorem~4.1~(d)]{DammDirectADI} is preserved by orthogonal projections in the symmetric case, cf. Proposition~\ref{prop:aux_pos_def}. 
However, we will not delve deeper into this topic, and further on simply assume that the projected problem has a unique solution.
% Hence in our setting Definition~\ref{def:Gal_approx} is, by assumption, reasonable.
Related to the (Galerkin) approximation we also have the (Galerkin) residual.
\begin{definition}[The Galerkin residual]\label{def:Gal_residual}
Let $\Kspace_k$ and $\Vbase_k$ be as in Definition~\ref{def:Gal_approx} and, assuming that it exists, let $\hat X_k$ be the Galerkin approximation in $\Kspace_k$ to \eqref{eq:GenLyap}. We define the \emph{Galerkin residual} as 
\begin{align*}
\R_k := \Lop(\hat X_k) + \Pi(\hat X_k) + BB^T.
\end{align*}
\end{definition}
By using this definition the projected problem is commonly expressed as $\Vbase_k^T\R_k\Vbase_k=0$, which is also known as the \emph{Galerkin orthogonality} condition and is a characteristic feature of the Galerkin residual.
%The Galerkin approximation is one of the most commonly used approximations for Krylov methods, and in general projection methods for matrix equations, although not the only choice \cite{Benner2012}.
%Definition~\ref{def:Gal_approx} gets its name from that the projected problem is in fact a (block) Galerkin orthogonality condition on the resulting residual. This is a special structure, and in order to make precise statements later we make the following definition.

\begin{remark}[Generic residual]\label{rem:gen_residual}
It is possible to define a generic residual, which we with a slight abuse of notation also call $\R_k$. We define it as: for any matrix $\hat X_k \in \RR^{n\times n}$, $\R_k := \Lop(\hat X_k) + \Pi(\hat X_k) + BB^T$. Some of the results and arguments presented are valid for a (generic) residual and others, more specialized, only for the Galerkin residual, or another residual specific to some approximation scheme. However, we will always connect the residual to a specific approximation and the Galerkin residual will always be referenced as such.
\end{remark}

We end this section with a specialization of a classical result from linear algebra, sometimes called the \emph{residual equation}.

\begin{proposition}[A matrix equation for the error]\label{prop:residual}
Let $\Lop$ and $\Pi$ be as in \eqref{eq:L}-\eqref{eq:Pi}, and let $X\in\RR^{n\times n}$ be the solution to \eqref{eq:GenLyap}. Moreover, let $\hat X_k \in \RR^{n\times n}$ be a given matrix, let $\R_k$ be the (generic) residual, and define the error $X^\text{e}_k := X - \hat X_k$. Then
\begin{align*}
\Lop(X^\text{e}_k) + \Pi(X^\text{e}_k) + \R_k = 0.
\end{align*}
\end{proposition}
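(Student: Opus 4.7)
The plan is to exploit the linearity of both operators $\Lop$ and $\Pi$ on $\RR^{n\times n}$, together with the fact that $X$ satisfies \eqref{eq:GenLyap} by assumption. Linearity of $\Lop$ is immediate from its definition $\Lop(X) = AX + XA^T$, while linearity of $\Pi$ follows from $\Pi(X) = \sum_{i=1}^m N_i X N_i^T$. Thus both distribute over the difference $X^\text{e}_k = X - \hat X_k$.

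First I would write
\begin{align*}
\Lop(X^\text{e}_k) + \Pi(X^\text{e}_k) = \Lop(X) - \Lop(\hat X_k) + \Pi(X) - \Pi(\hat X_k),
\end{align*}
using linearity. Next I would substitute $\Lop(X) + \Pi(X) = -BB^T$, which holds because $X$ solves \eqref{eq:GenLyap}. This yields
\begin{align*}
\Lop(X^\text{e}_k) + \Pi(X^\text{e}_k) = -BB^T - \Lop(\hat X_k) - \Pi(\hat X_k) = -\bigl(\Lop(\hat X_k) + \Pi(\hat X_k) + BB^T\bigr) = -\R_k,
\end{align*}
by the definition of the generic residual as recorded in Remark~\ref{rem:gen_residual}. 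Rearranging gives the claimed identity.

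There is really no main obstacle here: the proposition is a one-line consequence of linearity plus the defining equation of $X$, and no assumption on $\hat X_k$ (such as being a Galerkin approximation) is needed. The only thing worth being careful about is simply that $\Pi$ really is linear, since the sum of rank-one-like terms $N_i X N_i^T$ might look deceptively like a quadratic object at first glance; once this is observed the argument is a direct calculation.
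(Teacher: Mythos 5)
Your proof is correct and is exactly the standard argument: linearity of $\Lop$ and $\Pi$, substitution of the defining equation for $X$, and the definition of the generic residual. The paper itself omits the proof as immediate, so there is nothing to compare against beyond noting that your one-line computation is the intended one.
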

%\begin{proof}
%The proof is by direct computation, using the definition of the residual and the linearity of the operators, i.e., $\Lop(X^\text{e}_k) + \Pi(X^\text{e}_k)
%= 
%(\Lop(X) + \Pi(X)) - (\Lop(\hat X_k) + \Pi(\hat X_k))
%= - \R_k$.
%\qed
%\end{proof}
%The result in Proposition~\ref{prop:residual} is useful since it allows us to derive iterative algorithms for computing approximations to \eqref{eq:GenLyap}. 
%The usefulness can be expressed, using the notation of Proposition~\ref{prop:residual}, as the simple fact that if we can set $\hat X_{k+1} = \hat X_k + X^{e}_k$, then $\hat X_{k+1} = X$, the exact solution. Hence, one strategy for computing updates to the current iterate is to compute, or approximate, $X^e_k$, which Proposition~\ref{prop:residual} allows us to do based on the known (computable) quantities $\Lop$, $\Pi$ and $\R_k$.
%Proposition~\ref{prop:residual} is a specialization of a well known result in linear algebra, sometimes called the \emph{residual equation}. It is well used in, e.g., Multigrid methods, see \cite[Chapter~2]{Briggs:2000:multigrid}; and also partially presented in the literature in the context of matrix equations, c.f. \cite[Section~4.2]{DammDirectADI}, \cite[Algorithm~2]{Kressner:2015:Truncated}, and the analogy for the algebraic Riccati equation in \cite[Section~4]{Simoncini2016}.
The result in Proposition~\ref{prop:residual} is useful since it may serve as a starting point to derive iterative algorithms for computing approximations to \eqref{eq:GenLyap}. 
%The usefulness can be expressed, using the notation of Proposition~\ref{prop:residual}, as the simple fact that if we can set $\hat X_{k+1} = \hat X_k + X^{e}_k$, then $\hat X_{k+1} = X$, the exact solution. 
Hence, one strategy for computing updates to the current iterate is to compute, or approximate, $X^e_k$, which Proposition~\ref{prop:residual} allows us to do based on the known, or computable, quantities $\Lop$, $\Pi$ and $\R_k$.
%Analogous results are well used in, e.g., Multigrid methods, see \cite[Chapter~2]{Briggs:2000:multigrid}; and also partially presented in the literature in the context of matrix equations, c.f. \cite[Section~4.2]{DammDirectADI}, \cite[Algorithm~2]{Kressner:2015:Truncated}, and as early as 1968 \cite{Smith:1968:Matrix}.
The result for the Lyapunov equation was presented already by Smith in \cite{Smith:1968:Matrix}, and for generalized matrix equations cf. \cite[Section~4.2]{DammDirectADI}, and \cite[Algorithm~2]{Kressner:2015:Truncated}.
%the analogy for the algebraic Riccati equation in \cite[Section~4]{Simoncini2016}.

\subsection{Bilinear systems}
We recall some control theoretic concepts for \emph{bilinear control systems} of the form 
\begin{align}\label{eq:bil_con_sys}
\Sigma \left\{ \begin{aligned}
\dot{x}(t) &= Ax(t) + \sum_{i=1}^m N_i x(t) w_i(t) + Bu(t)\\
y(t) &= C x(t),
\end{aligned}\right.
\end{align}
with $A,N_i \in \RR^{n\times n}, B\in \RR^{n\times r}$ and $C\in \RR^{r\times n}$ and control inputs $u(t)\in\RR^{r}$ and $w(t)\in\RR^{m}$.

\begin{remark}
We highlight that the bilinear system $\Sigma$ in \eqref{eq:bil_con_sys} differs from the usual notation used in the  literature, see e.g.,  \cite{Ahmad2017,Al-Baiyat:1993:ANew,Benner2012,DammBenner,DammDirectADI,Flagg2015,ZHANG2002}. The formulation~\eqref{eq:bil_con_sys} is convenient since it allows for $m\neq r$. However, the system $\Sigma$ can be put on the usual form by considering the input vector $[w(t)^T u(t)^T]^T$, adding $m$ zero-columns to the beginning of $B$, i.e., considering $[0, B]$, and considering the matrices $N_i = 0$ for $i=m+1,m+2,\dots,m+r$. Thus it is reasonable to consider the same Gramians. The system $\Sigma$ can also be compared to systems from applications, e.g., \cite[Equation~(2)]{Mohler:1980:AnOverview}.
\end{remark}
As in  \cite{Al-Baiyat:1993:ANew}, for a MIMO bilinear system \eqref{eq:bil_con_sys}, we define the controllability and observability Gramian as follows.
\begin{definition}[Bilinear Gramians]\label{def:bil_gramians}
Let $A\in\RR^{n\times n}$ be a stable matrix, $N_1,\dots,N_m\in\RR^{n\times n}$, $B\in\RR^{n\times r}$, and $C\in\RR^{r\times n}$, and consider the bilinear system \eqref{eq:bil_con_sys}.
Moreover, let $P_1(t_1) := e^{At_1}B$, $P_j(t_1,\dots,t_j) := e^{At_j}[N_1P_{j-1}, \dots, N_mP_{j-1}]$ for $j=2,3,\dots$, $Q_1(t_1) := Ce^{At_1}$, and $Q_j(t_1,\dots,t_j) := [N_1^TQ_{j-1}^T,\dots,N_m^TQ_{j-1}^T]^T e^{At_j}$ for $j=2,3,\dots$. We define the controllability and observability Gramian respectively as
\begin{align*}
P &:= \sum_{j=1}^\infty \int_0^\infty \dots \int_0^\infty P_j P_j^T dt_1\dots dt_j\\
Q &:= \sum_{j=1}^\infty \int_0^\infty \dots \int_0^\infty Q_j^T Q_j dt_1\dots dt_j.
% W &:= \sum_{j=1}^\infty \int_0^\infty \dots \int_0^\infty P_j Q_j dt_1\dots dt_j.
\end{align*}
%\begin{align*}
%P_1(t_1) = e^{At_1}B, \qquad P_j(t_1,\dots,t_j) = e^{At_j}[N_1P_{j-1} \dots N_mP{j-1}] \text{ for j=2,3,\dots,} \qquad Q_1(t_1) = Ce^{At_1}, \qquad Q_j(t_1,\dots,t_j) = 
%\end{align*} 
\end{definition}
It is possible that the generalized Gramians from Definition~\ref{def:bil_gramians} do not exist; sufficient conditions are given in, e.g., \cite[Theorem~2]{ZHANG2002}. However, if the Gramians exist we also know that they satisfy the following matrix equations 
\begin{equation}\label{eq:bil_gramians}
\begin{aligned}
AP + PA^T + \sum_{i=1}^m N_i P N_i^T +BB^T &= 0\\
A^TQ + QA + \sum_{i=1}^m N_i^T Q N_i +C^T C &= 0.
% AW + WA + \sum_{i=1}^m N_i W N_i +BC &= 0.
\end{aligned}
\end{equation}
% \end{proposition}
% \begin{proof}
% See \cite{Al-Baiyat:1993:ANew} for controllability and observability Gramians. The same technique can be applied to the cross Gramian, c.f., the SISO case in \cite{Shaker:2015:Control}.
% \end{proof}

%It should be noted that the matrix equations \eqref{eq:bil_gramians} may have solutions without the corresponding Gramian existing. However, if the solution is positive definite, then the Gramian exists and the solution to \eqref{eq:bil_gramians}  \cite[Theorem~5]{ZHANG2002}

% \begin{proposition}\label{prop:error_gramian}
% {\color{red} XXX Should we remove this as well, or at least write it less formally? Since we removed the rest of the discussion about the cross Gramian. And then only make Corollary~\ref{cor:sym_error_gramian} as a Proposition?}
In relation to the generalized controllability and observability Gramians, one might also define a generalized cross Gramian similar to the SISO case discussed in \cite{Shaker:2015:Control}. 
Consider the generalized Lyapunov equation~\eqref{eq:GenLyap}, and an approximation $\hat X_k$ with related error $X_k^e$ and residual $\R_k$.
One can easily verify that for the auxiliary system 
\begin{align*}
\Sigma^\text{e} = \left\{ \begin{aligned}
\dot{x}(t) &= Ax(t) + \sum_{i=1}^m N_i x(t) w_i(t) + B_{\R_k}u(t)\\
y(t) &= C_{\R_k} x(t),
\end{aligned}\right.
\end{align*}
where $ B_{\R_k} = US^{1/2}$, and $C_{\R_k} = S^{1/2}V^T$, with $\R_k = USV^T$ being a singular value decomposition of $\R_k$, the associated cross Gramian coincides with the error $X_k^e$. In the special case of a symmetric system, we additionally have the following result.
% \end{proposition}
% \begin{proof}
% This is a consequence of Proposition~\ref{prop:residual}, the symmetry of $A$ and $N_i$, and Proposition~\ref{prop:mateq_bil_gramians}.
% \end{proof}
\begin{proposition}\label{prop:sym_error_gramian}
%If the generalized Lyapunov equation is symmetric, and the residual is $\R_k = \R_k^T \succeq 0$, then one can choose $B_{\R_k}=C_{\R_k}^T$ and the error $X^\text{e}_k$ is the controllability and observability Gramian of the bilinear system.
Consider a symmetric generalized Lyapunov equation. Let $\hat X_k$ be an approximation such that the residual $\R_k = \R_k^T \succeq 0$. Then one can choose $B_{\R_k}=C_{\R_k}^T$ and the error $X^\text{e}_k$ is the controllability and observability Gramian of the auxiliary bilinear system.
\end{proposition}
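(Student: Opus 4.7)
The plan is to combine the residual equation of Proposition~\ref{prop:residual} with the Gramian characterization in \eqref{eq:bil_gramians}, exploiting symmetry at every stage. The result is essentially an identification of two generalized Lyapunov equations with the same right-hand side, followed by an appeal to uniqueness.

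First, I would use the hypothesis $\R_k = \R_k^T \succeq 0$ to choose its singular value decomposition with $U=V$, i.e., the spectral decomposition $\R_k = USU^T$ with $S \succeq 0$. Setting $B_{\R_k} := US^{1/2}$ then forces $C_{\R_k} = S^{1/2}V^T = S^{1/2}U^T = B_{\R_k}^T$, and consequently
\begin{equation*}
B_{\R_k}B_{\R_k}^T \;=\; C_{\R_k}^T C_{\R_k} \;=\; \R_k.
\end{equation*}

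Second, I would specialize the two Lyapunov equations in \eqref{eq:bil_gramians} to the symmetric data of $\Sigma^\text{e}$, using $A=A^T$ and $N_i=N_i^T$ together with the identities above. Both the controllability and the observability Gramian equation of $\Sigma^\text{e}$ then collapse to the \emph{same} generalized Lyapunov equation
\begin{equation*}
\Lop(Z) + \Pi(Z) + \R_k = 0,
\end{equation*}
for $Z=P$ and $Z=Q$ respectively. By Proposition~\ref{prop:residual}, the error $X^\text{e}_k$ satisfies exactly this equation as well. The standing assumption $\rho(\Lop^{-1}\Pi)<1$ guarantees that the equation has a unique solution, and hence $P = Q = X^\text{e}_k$.

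The main obstacle is really only bookkeeping: one must confirm that the PSD and symmetry hypotheses on $\R_k$ genuinely allow the choice $U=V$ in the SVD (this is where positivity, rather than just symmetry, of $\R_k$ is used — otherwise one would pick up signs in $S^{1/2}$), and that the existence of the bilinear Gramians of $\Sigma^\text{e}$ is inherited from the spectral radius assumption on $\Lop^{-1}\Pi$ rather than requiring the separate condition referenced from \cite{ZHANG2002}. Neither is serious; both follow from machinery already collected in Section~\ref{sect:prel}.
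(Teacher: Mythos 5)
Your proposal is correct and follows exactly the argument the paper leaves implicit (the proposition is stated without a written proof): for symmetric PSD $\R_k$ the SVD can be taken with $U=V$ so that $B_{\R_k}=C_{\R_k}^T$ and $B_{\R_k}B_{\R_k}^T=C_{\R_k}^TC_{\R_k}=\R_k$, whence both Gramian equations in \eqref{eq:bil_gramians} reduce, via $A=A^T$ and $N_i=N_i^T$, to the residual equation of Proposition~\ref{prop:residual}, and uniqueness under $\rho(\Lop^{-1}\Pi)<1$ identifies $P=Q=X^\text{e}_k$. Your side remarks on where positivity (as opposed to mere symmetry) of $\R_k$ is used and on the existence of the Gramian series are also accurate.
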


For what follows, let us recall the definition of the $\mathcal{H}_2$-norm for bilinear systems that was introduced by Zhang and Lam in \cite{ZHANG2002}.

\begin{definition}\label{def:bil_h2}
We define the $\H2$-norm for the bilinear system \eqref{eq:bil_con_sys} as
  \begin{align*}
\|\Sigma \|_{\H2}^2:=\trace\left(\sum_{j=1}^{\infty}\int_0^{\infty}
  \dots \int_0^{\infty} \sum_{\ell_1,\cdots,\ell_j=1}^mg_j^{(\ell_1,\dots,
  \ell_j)}(g_j^{(\ell_1,\dots,\ell_j)})^T \mathrm{d}s_1 \cdots \mathrm{d}s_j \right) ,  
  \end{align*}
  with $g_j^{(\ell_1,\dots,\ell_j)}(s_1,\dots,s_k)
  :=Ce^{As_j}N_{\ell_1}e^{As_{j-1}} N_{\ell_2} \cdots e^{As_1}b_{\ell_j}.$
\end{definition} 
It has been shown in \cite[Theorem~6]{ZHANG2002} that if the Gramians $P$ and $Q$ from Definition~\ref{def:bil_gramians} exist, then it holds that
\begin{align*}
  \| \Sigma \|_{\H2}^2 = \trace\left(CPC^T\right) = \trace \left(B^T Q B\right).
\end{align*}

%%%%%%%%%%%%%%%%%%%%%%%%%%%%%%%%%%%%%%%%%%%%%%%%%%%%%%%%%%%%%%%%%%%%%%%%
\section{ALS and $\H2$-optimal model reduction for bilinear systems}\label{sect:H2_connection}

In this section, we discuss a low rank approximation method proposed by Kressner and Sirkovi{\'c} in \cite{Kressner:2015:Truncated}. We show that several results can be generalized from the case of standard Lyapunov equations to the more general form \eqref{eq:GenLyap}. Moreover, we show that in the symmetric case the method allows for an interpretation in terms of $\H2$-optimal model reduction for bilinear control systems. With this in mind, we assume that $A=A^T \prec 0$ and $N_i=N_i^T$ for $i=1,\dots,m.$ If additionally we have that $\rho(\Lop^{-1}\Pi)<1,$ the operator $\mathcal{M}(X):=-\Lop(X)-\Pi(X)$ allows us to define a weighted inner product via
\begin{align}\label{eq:M-norm}
\begin{aligned}
  \langle \cdot,\cdot \rangle_{\mathcal{M}}&\colon \RR^{n\times n} \times \RR^{n \times n} \to \RR \\
  \langle X,Y \rangle_{\mathcal{M}} &= \langle X,\mathcal{M}(Y) \rangle = \trace\left(X^T \mathcal{M}(Y) \right),
\end{aligned}
\end{align}
with a corresponding induced $\mathcal{M}$-norm, also known as \emph{energy norm},
\begin{align*}
\|X\|^2 = \langle X,X \rangle_{\mathcal{M}}.
\end{align*}

\subsection{ALS for the generalized Lyapunov equation}\label{sect:ALS}

In \cite{Kressner:2015:Truncated}, it is suggested to construct iterative approximations $\hat{X}_k$ by rank-1 updates that are locally optimal with respect to the $\mathcal{M}$-norm. To be more precise, assume that $X$ is a solution to the symmetric Lyapunov equation \eqref{eq:GenLyap}, i.e.,
\begin{align*}
 AX + XA + \sum_{i=1}^m N_i X N_i + BB^T=0.
\end{align*}
Given an approximation $\hat{X}_k$, we consider the minimization problem 
\begin{align*}
 \min_{v,w\in \mathbb R^n} \| X-\hat{X}_k - v w^T\|_{\mathcal{M}}^2 &= \langle X-\hat{X}_k - v w^T, X-\hat{X}_k - v w^T \rangle_{\mathcal{M}}. 
\end{align*}
Since the minimization involves the constant term $\| X-\hat{X}_k\|^2_{\mathcal{M}},$ it suffices to focus on
\begin{align}\label{eq:J}
 J(v,w):= \langle vw^T , vw^T \rangle_{\mathcal{M}} - 2 \trace\left( wv^T \R_k\right),
\end{align}
where $\R_k$ is the current residual, i.e.,
\begin{align*}
 \R_k:= A \hat{X}_k + \hat{X}_k A + \sum_{i=1}^m N_i \hat{X}_k N_i + BB^T.
\end{align*}
Locally optimal vectors $v_k$ and $w_k$ are then (approximately) determined via an \emph{alternating linear scheme} (ALS). The main step is to fix one of the two vectors, e.g., $v$ and then minimize the strictly convex objective function in order to obtain an update for $w.$ A pseudocode of the algorithm presented in \cite{Kressner:2015:Truncated} is given in Algorithm~\ref{alg:ALS}.
\begin{algorithm} \label{alg:ALS}
%\small
\caption{ALS for the generalized Lyapunov equation \eqref{eq:GenLyap} \cite[Algorithm~1]{Kressner:2015:Truncated}}
\SetKwInOut{Input}{input}\SetKwInOut{Output}{output}
\Input{System: $A,N_1,\dots,N_m\in\RR^{n\times n}$ $\R_k \in\RR^{n\times n}$, $\tol$\\Initial guess: $v, w \in \RR^{n}$}
\Output{New approximation vectors: $v^\text{a},w^\text{a}\in\RR^{n}$}
\BlankLine
\While{Change in $(\frac{v^TAv}{\|v\|^2} + \frac{w^TA^Tw}{\|w\|^2})/2$ larger than $\tol$\vspace{0.8ex}}{
\nl Normalize $w = w/\|w\|$\label{alg_step:ALS:norm_w}\\
\nl $\hat A_1 = A + I(w^TAw) + \sum_{i=1}^m N_i(w^TN_iw)$\label{alg_step:ALS:A1}\\
\nl Solve $\hat A_1 v = -\R_k w$\label{alg_step:ALS:solve_v}\\
\nl Normalize $v = v/\|v\|$\label{alg_step:ALS:norm_v}\\
\nl $\hat A_2 = A + I(v^TAv) + \sum_{i=1}^m (v^TN_i v)N_i$\label{alg_step:ALS:A2}\\
\nl Solve $\hat A_2 w = -\R_k^T v$\label{alg_step:ALS:solve_w}\\
}
\nl Normalize such that $\|w\| = \|v\|$\label{alg_step:ALS:norm_balance}\\
\Return $v^\text{a} = v$, $w^\text{a} = w$
\end{algorithm}
The ALS-based approach for computing new subspace extensions can also, in terms of Proposition~\ref{prop:residual}, be seen as searching for an approximation to $X^\text{e}_k$ of the form $v_kw_k^T$ by iterating $(\Lop(v_kw_k^T) + \Pi(v_kw_k^T) + \R_k)w_k = 0$ when determining $v_k$ and $v_k^T(\Lop(v_kw_k^T) + \Pi(v_kw_k^T) + \R_k) = 0$ when determining $w_k$. This is to say that the error is approximated by a rank-1 matrix, and at convergence this would result in the new residual, $\R_{k+1}$, being left-orthogonal to $v_{k}$ and right-orthogonal to $w_{k}$.
In the symmetric case, local minimizers of \eqref{eq:J} are necessarily symmetric positive semidefinite. This yields the following extension of \cite[Lemma~2.3]{Kressner:2015:Truncated}.

\begin{lemma}\label{lem:2.3_gen}
  Consider the symmetric generalized Lyapunov equation \eqref{eq:GenLyap} and assume that $A\prec 0$, $\rho(\Lop^{-1}\Pi)<1$, and $\R_k=\R_k^T\succeq0$. Let $J$ be as in \eqref{eq:J}.
 Then every local minimum $(v_*,w_*)$ to $J$ is such that $v_*w_*^T$ is symmetric positive semidefinite.
\end{lemma}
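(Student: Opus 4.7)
The plan is to closely follow the proof of Lemma~2.3 in \cite{Kressner:2015:Truncated}, which establishes the same conclusion in the special case $\Pi = 0$, and to track the extra contributions from the correction operator $\Pi$. The first step is to rewrite $J$ by completing the square in the $\mathcal{M}$-inner product. By Proposition~\ref{prop:residual} one has $\R_k = \mathcal{M}(X^\text{e}_k)$ with $X^\text{e}_k = X - \hat X_k$, and hence
\[
J(v,w) \;=\; \| v w^T - X^\text{e}_k \|_{\mathcal{M}}^2 \;-\; \| X^\text{e}_k \|_{\mathcal{M}}^2 .
\]
Since $\R_k$ is symmetric positive semidefinite and the symmetric generalized Lyapunov operator both preserves symmetry and maps PSD right-hand sides to PSD solutions (cf.\ Proposition~\ref{prop:aux_pos_def}), $X^\text{e}_k$ is itself symmetric PSD. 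Minimizing $J$ over $(v,w)$ is therefore equivalent to best rank-$\leq 1$ approximation, in the $\mathcal{M}$-norm, of the symmetric PSD matrix $X^\text{e}_k$.

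Next I would write down the first-order stationarity equations $\partial_v J = \partial_w J = 0$; these coincide with the two linear systems in steps~\ref{alg_step:ALS:solve_v} and~\ref{alg_step:ALS:solve_w} of Algorithm~\ref{alg:ALS}. A direct calculation yields $x^T \hat A_1 x = -\|x w^T\|_{\mathcal{M}}^2$ for every $x \in \RR^n$, and combined with positive-definiteness of $\mathcal{M}$ on $\RR^{n \times n}$ (a consequence of $A \prec 0$ together with $\rho(\Lop^{-1}\Pi) < 1$) this gives $\hat A_1 \prec 0$ whenever $w \neq 0$; analogously for $\hat A_2$. If $v_* = 0$ or $w_* = 0$, then $v_* w_*^T = 0$ is trivially symmetric PSD, so I may assume $v_*, w_* \neq 0$. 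Multiplying the first stationarity equation by $v_*^T$ yields the key identity $v_*^T \R_k w_* = \| v_* w_*^T \|_{\mathcal{M}}^2 > 0$.

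It remains to prove that $v_*$ and $w_*$ are linearly dependent. The structural ingredient is the swap invariance $J(v,w) = J(w,v)$, valid precisely because $A$, $N_i$, and $\R_k$ are all symmetric, so $(w_*, v_*)$ is also a local minimum. I would then invoke the second-order necessary condition at $(v_*, w_*)$ and test the antisymmetric perturbation $(a,b) = (w_*, -v_*)$, which is the infinitesimal generator of the swap. After using the stationarity relations to cancel the cross-terms between the $\mathcal{M}$- and $\R_k$-parts, the Hessian inequality reduces to a quadratic form that, under the standing assumptions, forces $v_* w_*^T = w_* v_*^T$, i.e.\ $w_* = \alpha v_*$ for some $\alpha \in \RR$. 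Substituting back into the identity of the previous paragraph gives $\alpha^2 \|v_* v_*^T\|_{\mathcal{M}}^2 = \alpha\, v_*^T \R_k v_*$; since $v_* \neq 0$ implies $\|v_* v_*^T\|_{\mathcal{M}}^2 > 0$ and $\alpha = 0$ would contradict $w_* \neq 0$, one concludes $\alpha > 0$, and therefore $v_* w_*^T = \alpha v_* v_*^T \succeq 0$.

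The main obstacle is the explicit Hessian calculation in the parallelism step. The additional terms from $\Pi$, typically of the form $\sum_i (v_*^T N_i w_*)^2$ and $\sum_i (v_*^T N_i v_*)(w_*^T N_i w_*)$, must combine cleanly with the stationarity relations; here the symmetry $N_i = N_i^T$ is essential, because it lets the $\Pi$-contributions mirror the structural pattern of the $A$-contributions in the proof of \cite[Lemma~2.3]{Kressner:2015:Truncated}, while the spectral bound $\rho(\Lop^{-1}\Pi) < 1$ is needed to control the sign of the mixed terms. Verifying this cancellation in detail is the point at which the generalization goes beyond a direct transcription of the $\Pi = 0$ argument.
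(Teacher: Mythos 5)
Your setup is fine as far as it goes: completing the square, identifying the stationarity equations with Steps~\ref{alg_step:ALS:solve_v} and \ref{alg_step:ALS:solve_w}, the observation $x^T\hat A_1x=-\|xw^T\|_{\mathcal M}^2$ giving $\hat A_1\prec0$, and the identity $v_*^T\R_kw_*=\|v_*w_*^T\|_{\mathcal M}^2$ are all correct. But the heart of the lemma --- forcing $v_*$ and $w_*$ to be parallel --- is precisely the step you leave as an assertion, and the mechanism you propose for it does not work. A second-order \emph{necessary} condition at a local minimum only tells you $\phi''(0)\ge0$ for $\phi(t)=J(v_*+tw_*,\,w_*-tv_*)$; to extract anything you would have to show $\phi''(0)<0$ whenever $v_*\not\parallel w_*$. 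Carrying out the computation from $J(v,w)=-(v^TAv)\|w\|^2-\|v\|^2(w^TAw)-\sum_i(v^TN_iv)(w^TN_iw)-2v^T\R_kw$ and substituting the stationarity identity for the residual term gives, already in the simplest case $N_i=0$, $A=-I$,
\[
\phi''(0)=4\bigl(\|v_*\|^2+\|w_*\|^2\bigr)^2-16\,(v_*^Tw_*)^2\;\ge\;4\bigl(\|v_*\|^2-\|w_*\|^2\bigr)^2\;\ge\;0
\]
by Cauchy--Schwarz, for \emph{every} pair $(v_*,w_*)$. The test direction $(w_*,-v_*)$ is thus automatically consistent with the second-order condition and yields no contradiction; nothing is ``forced.'' Your reframing as best rank-one approximation of the PSD matrix $X^{\mathrm e}_k$ in the $\mathcal M$-norm also does not help on its own: for a generic SPD weighting, the best rank-one approximation of a symmetric PSD matrix need not be symmetric, so the specific structure of $\mathcal M$ must enter.

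The paper closes this step with a zeroth-order, not second-order, use of the swap symmetry. Normalize $\|v_*\|=\|w_*\|$; since $v_*$ is the unique global minimizer of the strictly convex map $v\mapsto J(v,w_*)$ (this is exactly what your $\hat A_1\prec0$ buys) and likewise for $w_*$, one gets $2J(v_*,w_*)<J(v_*,v_*)+J(w_*,w_*)$ whenever $v_*\ne w_*$. Expanding both sides, the $\Lop$-contributions cancel exactly because $\|v_*\|=\|w_*\|$, the $\Pi$-contributions combine into $\sum_i(v_*^TN_iv_*-w_*^TN_iw_*)^2\ge0$, and what remains is $-2(v_*-w_*)^T\R_k(v_*-w_*)>0$, contradicting $\R_k\succeq0$. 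If you want to keep the spirit of your argument, replace the Hessian test by this function-value comparison along the swap.
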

\begin{proof}
The proof, naturally, follows along the lines of \cite[Lemma 2.3]{Kressner:2015:Truncated}, and hence without loss of generality we assume that $v_*\neq 0$ , $w_*\neq 0$, and $\|v_*\|=\|w_*\|$. Thus $v_*w_*^T$ is positive semidefinite if and only if $v_*=w_*$. The proof is by contradiction and we assume that $v_*\neq w_*$. Then from the strict convexity of $J$ we have that \begin{align*}
 2 J(v_*,w_*) &< J(v_*,v_*) + J(w_*,w_*).
  \end{align*}
  Simplifying the left-hand-side we get
  \begin{align*}
  &2 J(v_*,w_*) = \\
  &\quad
 -2\trace((v_*w_*^T)^T\Lop(v_*w_*^T)) -2\trace((v_*w_*^T)^T\Pi(v_*w_*^T)) - 4\trace((v_*w_*^T)^T\R_k)
 \\
 &=
 -2 v_*^T\Lop(v_*w_*^T)v_* -2 v_*^T\Pi(v_*w_*^T)w_* - 4 v_*^T\R_k w_*,
 \end{align*}
 and similarly the right-hand-side gives
 \begin{align*}
 J(v_*,v_*) + J(w_*,w_*) =
 &- v_*^T\Lop(v_*v_*^T)v_* - v_*^T\Pi(v_*v_*^T)v_* - 2 v_*^T\R_k v_*
 \\
 &- w_*^T\Lop(w_*w_*^T)w_* - w_*^T\Pi(w_*w_*^T)w_* - 2 w_*^T\R_k w_*.
 \end{align*}
 Collecting the terms involving the  $\Lop$-operator we observe that 
 \begin{align*}
-2 v_*^T\Lop(v_*w_*^T)w_* + v_*^T\Lop(v_*v_*^T)v_* + w_*^T\Lop(w_*w_*^T)w_* &=\\
 2(v_*^Tv_*)(w_*^TAw_* - v_*^TAv_*) + 2w_*^Tw_*(v_*^TAv_* - w_*^TAw_*) &= 0,
 \end{align*}
 and thus the inequality reduces to
 \begin{align*}
 &-2 v_*^T\Pi(v_*w_*^T)w_* - 4 v_*^T\R_k w_* <\\&\quad - v_*^T\Pi(v_*v_*^T)v_* - 2 v_*^T\R_k v_* - w_*^T\Pi(w_*w_*^T)w_* - 2 w_*^T\R_k w_*.
 \end{align*}
 We rewrite the inequality  as
  \begin{align*}
 -2 v_*^T\Pi(v_*w_*^T)w_* + v_*^T\Pi(v_*v_*^T)v_* + w_*^T\Pi(w_*w_*^T)w_* 
 < -2 (v_*-w_*)^T \R_k (v_*-w_*).
 \end{align*}
 Now if we can show that 
 \begin{align*}
 -2 v_*^T\Pi(v_*w_*^T)w_* + v_*^T\Pi(v_*v_*^T)v_* + w_*^T\Pi(w_*w_*^T)w_* \geq 0,
 \end{align*}
 this would imply that $-2 (v_*-w_*)^T \R_k (v_*-w_*)>0$ in contradiction to the positive semidefiniteness of $\R_k$.
The argument is hence concluded by considering the expression $-2 v_*^T\Pi(v_*w_*^T)w_* + v_*^T\Pi(v_*v_*^T)v_* + w_*^T\Pi(w_*w_*^T)w_*$. We can without loss of generality consider $m=1$, hence only one $N$-matrix, since the following argument can be applied to all terms in the sum independently. Observe that
 \begin{align*}
 -2 v_*^T N v_*w_*^T N w_* + v_*^TNv_*v_*^TNv_* + w_*^TNw_*w_*^TNw_* = 
 (v_*^TNv_* - w_*^TNw_*)^2 \geq 0,
 \end{align*}
 which shows the desired inequality, and thus concludes the proof.
 \qed
\end{proof}
Algorithm~\ref{alg:ALS} and the argument in Lemma~\ref{lem:2.3_gen} is based on a residual. However, if one considers $\hat X_k = 0$, then $\R_k = BB^T$, and hence the result is applicable directly to any symmetric generalized Lyapunov equation. The focus on the residual in the previous results is natural since it leads to the following extension of \cite[Theorem~2.4]{Kressner:2015:Truncated} to the case of symmetric generalized Lyapunov equations.
\begin{theorem}\label{thm:2.4_gen}
  Consider the symmetric generalized Lyapunov equation \eqref{eq:GenLyap} with the additional assumptions that $A\prec 0$ and $\rho(\Lop^{-1}\Pi)<1$. Moreover, consider the sequence of approximations constructed as
  \begin{align}\label{eq:ALS_iter}
  \begin{aligned}
  \hat X_0 &= 0\\
  \hat X_{k+1} &= \hat X_k + v_{k+1}v_{k+1}^T, \qquad k = 0,1,\dots,
 \end{aligned}
\end{align} 
where $v_{k+1}$ is a locally optimal vector computed with ALS (Algorithm~\ref{alg:ALS}).
 Then $\R_k=\R_k^T \succeq 0$ for all $k\ge 0$.
\end{theorem}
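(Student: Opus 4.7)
I would argue by induction on $k$. The base case $k=0$ is immediate: $\hat X_0 = 0$ yields $\R_0 = BB^T$, which is symmetric and positive semidefinite. For the inductive step, suppose $\R_k = \R_k^T \succeq 0$. Then Lemma~\ref{lem:2.3_gen} applies to the ALS subproblem and guarantees that the local optimum produced by Algorithm~\ref{alg:ALS} (after the final $\|w\|=\|v\|$ balancing) has the form $vv^T$ for a single vector $v := v_{k+1}$. The assumptions $A = A^T$ and $N_i = N_i^T$ then make both $\Lop(vv^T)$ and $\Pi(vv^T)$ symmetric, so $\R_{k+1}$ inherits symmetry from $\R_k$; the substantive part of the claim is the positive semidefiniteness.

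The first step is to extract the first-order stationarity condition $\nabla_v J(v,v) = 0$ from $J$ as in~\eqref{eq:J}. A direct differentiation gives
\[
  \R_k v \;=\; -\alpha A v \;-\; \beta v \;-\; \sum_{i=1}^m \mu_i N_i v,
  \qquad \alpha := v^T v,\quad \beta := v^T A v,\quad \mu_i := v^T N_i v.
\]
Introducing the auxiliary symmetric matrix $\mathcal{S} := \R_k + \Pi(vv^T)$, this simplifies to $\mathcal{S} v = -\alpha A v - \beta v$, since $\Pi(vv^T)v = \sum_i \mu_i N_i v$ absorbs the $\mu_i N_i v$ terms. Taking an inner product with $v$ then yields the scalar identity $v^T \mathcal{S} v = -2\alpha\beta$.

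The core of the proof is the algebraic identity
\[
  \R_{k+1} \;=\; P\,\mathcal{S}\,P, \qquad P := I - \tfrac{1}{\alpha}\, v v^T,
\]
where $P$ is the orthogonal projector onto $v^\perp$. To verify it I would expand $P\mathcal{S}P = \mathcal{S} - \tfrac{1}{\alpha}\mathcal{S}vv^T - \tfrac{1}{\alpha}vv^T\mathcal{S} + \tfrac{1}{\alpha^2} v(v^T\mathcal{S}v)v^T$, substitute $\mathcal{S}v = -\alpha A v - \beta v$ (and its transpose, by symmetry of $\mathcal{S}$) into the two cross terms, and substitute $v^T\mathcal{S}v = -2\alpha\beta$ into the quadratic term. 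The contributions proportional to $\tfrac{\beta}{\alpha}vv^T$ cancel exactly, and one is left with $\mathcal{S} + Avv^T + vv^T A = \R_k + \Pi(vv^T) + \Lop(vv^T) = \R_{k+1}$.

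Given the identity the conclusion is immediate: by the inductive hypothesis $\R_k \succeq 0$, while $\Pi(vv^T) = \sum_{i=1}^m (N_i v)(N_i v)^T \succeq 0$, so $\mathcal{S} \succeq 0$, and conjugation by $P$ preserves positive semidefiniteness. I expect the main obstacle to be the algebraic bookkeeping that produces the clean identity $\R_{k+1} = P\mathcal{S}P$; the pleasant feature is that the $\Lop$- and $\Pi$-contributions conspire so that the entire rank-one correction collapses into a single conjugation by the projection onto $v^\perp$, making the PSD conclusion transparent without having to exhibit rank-one ``correction'' terms one by one.
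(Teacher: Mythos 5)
Your proof is correct and follows essentially the same route as the paper: induction, the symmetry of the minimizer via Lemma~\ref{lem:2.3_gen}, the first-order condition \eqref{eq:13_gen_b}, and its contraction \eqref{eq:14_gen_b}. Your identity $\R_{k+1}=P\mathcal{S}P$ with $\mathcal{S}=\R_k+\Pi(v v^T)$ is exactly the paper's final expression $y^T\R_{k+1}y=(Py)^T\R_k(Py)+\sum_{i=1}^m\bigl((PN_iv)^Ty\bigr)^2/1$ written in matrix rather than quadratic-form language, and is arguably a cleaner way to organize the same computation.
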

\begin{proof}
We show the assertion by induction. It trivially holds that $\R_0=\R_0^T\succeq 0$. Now assume that this is the case for some $\R_k$. %; note that this makes the idea of setting the right-hand side $BB^T=\R_k$ and using symmetric ALS to compute the sequence $\{\hat X_k\}_{k=0}^{s+1}$ valid.
From Lemma~\ref{lem:2.3_gen} the local minimizers of \eqref{eq:J} are symmetric and hence $\hat X_{k+1}$ as in \eqref{eq:ALS_iter} is reasonable as defined. Moreover, since $\hat X_{k+1}$ and the operators in \eqref{eq:GenLyap} are symmetric we know that $\R_{k+1}$ is symmetric. Thus what is left to show is that $\R_{k+1} \succeq0$.

We have that $\R_{k+1} \succeq0$ if and only if $y^T\R_{k+1}y\geq0$ for all $y\in\RR^{n}$. Hence take an arbitrary $y\in\RR^{n}$ and consider $y^T\R_{k+1}y$. We derive properties similar to \cite[equations~(12)-(14)]{Kressner:2015:Truncated}: 

Since $(v_{k+1},v_{k+1})$ is a (local) minimizer of $J(v,w)$, it also follows that $v_{k+1}$ is a (global) minimizer of the (convex) cost function
$$J_w(v):=J(v,w)=\langle vw^T,vw^T\rangle _{\mathcal{M}} - 2\trace(wv^T\R_k),$$
where $w=v_{k+1}$. Note that the gradient $\nabla_vJ_w$ of $J_w$ with respect to $v$ is given by
\begin{align*}
 (\nabla_vJ_w)_i =2\langle e_iw^T,vw^T \rangle_{\mathcal{M}}- 2e_i^T \R_k w .
\end{align*}
Due to the optimality of $v_{k+1}$ with respect to $J_{v_{k+1}}$, first order optimality conditions then imply that
\begin{align}\label{eq:13_gen_b}
 -Av_{k+1}v_{k+1}^Tv_{k+1}-v_{k+1}v_{k+1}^TAv_{k+1}-\sum_{i=1}^mN_iv_{k+1}v_{k+1}^TN_iv_{k+1}=\R_k v_{k+1} .
\end{align}
Striking this equality with $v_{k+1}^T$  from the left implies that
\begin{align}\label{eq:14_gen_b}
2  v_{k+1}^TA v_{k+1} \|v_{k+1}\|^2 = - v_{k+1}^T \R_{k} v_{k+1} - \sum_{i=1}^m ( v_{k+1}^T N_i   v_{k+1})^2 .
\end{align}
Based on this we can write the residual as
{%\small
\begin{align*}
&y^T\R_{k+1}y = y^T\R_{k} y + y^T\left(\Lop(v_{k+1}v_{k+1}^T) + \Pi(v_{k+1}v_{k+1}^T)\right)y
\\&=
 y^T\R_{k} y + y^T\left(Av_{k+1}v_{k+1}^T + v_{k+1}v_{k+1}^TA + \sum_{i=1}^m N_iv_{k+1}v_{k+1}^T N_i\right)y
\\&=
 y^T\R_{k} y + \sum_{i=1}^m y^TN_iv_{k+1}v_{k+1}^T N_i y + \frac{1}{\|v_{k+1}\|^2}\Bigg(
\\&\quad +  y^T\left(-\R_{k} v_{k+1}v_{k+1}^T - ( v_{k+1}^TA v_{k+1})  v_{k+1}v_{k+1}^T - \sum_{i=1}^m ( v_{k+1}^T N_i v_{k+1})N_iv_{k+1}v_{k+1}^T\right)y
\\&\quad +  y^T\left(-v_{k+1}  v_{k+1}^T\R_{k} - v_{k+1}v_{k+1}^T ( v_{k+1}^TAv_{k+1})  - \sum_{i=1}^m ( v_{k+1}^T N_iv_{k+1})v_{k+1}v_{k+1}^T N_i\right)y\Bigg).
\end{align*}}%
Here, the third equality follows from \eqref{eq:13_gen_b} and its transpose by exploiting the symmetry of the involved matrices. We rearrange, identify the term $-2( v_{k+1}^T A  v_{k+1})  v_{k+1} v_{k+1}^T$ and insert \eqref{eq:14_gen_b} to get
{%\small
\begin{align*}
&y^T\R_{k+1}y
=
 y^T\R_{k} y
\\&\quad + \frac{1}{\|v_{k+1}\|^2}y^T\left(-\R_{k} v_{k+1}v_{k+1}^T -v_{k+1}v_{k+1}^T\R_{k} + \frac{1}{\|v_{k+1}\|^2}v_{k+1}^T\R_{k}v_{k+1} v_{k+1} v_{k+1}^T \right)y
\\&\quad +\frac{1}{\|v_{k+1}\|^2} y^T \left(\sum_{i=1}^m N_i v_{k+1}v_{k+1}^TN_i \|v_{k+1}\|^2 + \frac{1}{\|v_{k+1}\|^2}\sum_{i=1}^m ( v_{k+1}^TN_i v_{k+1})^2 v_{k+1}v_{k+1}^T\right) y
\\&\quad +\frac{1}{\|v_{k+1}\|^2} y^T\left( - \sum_{i=1}^m ( v_{k+1}^T N_i v_{k+1})N_iv_{k+1}v_{k+1}^T - \sum_{i=1}^m ( v_{k+1}^T N_iv_{k+1})v_{k+1}v_{k+1}^T N_i\right)y \displaybreak[0]
\\[2ex]&=
%y^T\R_{k} y - 2(y^T\R_{k}\hat v_{k+1})\|v_{k+1}\|(\hat v_{k+1}^Ty) + (\hat v_{k+1}^T\R_{k}\hat v_{k+1})\|v_{k+1}\| (\hat v_{k+1}^T y)^2
 y^T\R_{k} y 
\\&\quad + \frac{1}{\|v_{k+1}\|^2}y^T\left(-\R_{k} v_{k+1}v_{k+1}^T -v_{k+1}v_{k+1}^T\R_{k} + \frac{1}{\|v_{k+1}\|^2}v_{k+1}^T\R_{k}v_{k+1} v_{k+1} v_{k+1}^T \right)y
\\&\quad +  \frac{1}{\|v_{k+1}\|^2}\Bigg(\sum_{i=1}^m (y^TN_iv_{k+1})^2\|v_{k+1}\|^2 +  \frac{1}{\|v_{k+1}\|^2} ( v_{k+1}^TN_i v_{k+1})^2 ( v_{k+1}^Ty)^2 \\ 
&\hspace{4cm} - 2 ( v_{k+1}^T N_i v_{k+1}) (y^TN_i v_{k+1})( v_{k+1}^T y ) \Bigg) \displaybreak[0]
\\[2ex]&=
y^T\R_{k}y 
\\&\quad+ \frac{1}{\|v_{k+1}\|^2} \left( -2(y^T\R_{k} v_{k+1})(v_{k+1}^Ty) + \frac{1}{\|v_{k+1}\|^2}( v_{k+1}^T\R_{k}v_{k+1})( v_{k+1}^Ty)^2  \right)
\\&\quad + \frac{1}{\|v_{k+1}\|^2}\sum_{i=1}^m\left( \|v_{k+1}\|(y^TN_i v_{k+1}) -  \frac{1}{\|v_{k+1}\|}( v_{k+1}^TN_i v_{k+1})( v_{k+1}^T y)\right)^2 \displaybreak[0]
\\[2ex]&=
(y - v_{k+1} \frac{v_{k+1}^Ty}{\|v_{k+1}\|^2} )^T\R_{k}(y - v_{k+1} \frac{v_{k+1}^Ty}{\|v_{k+1}\|^2} )
\\&\quad + \frac{1}{\|v_{k+1}\|^2}\sum_{i=1}^m\left( \|v_{k+1}\|(y^TN_i v_{k+1}) -  \frac{1}{\|v_{k+1}\|}( v_{k+1}^TN_i v_{k+1})( v_{k+1}^T y)\right)^2
\\ &\geq 0.
\end{align*}}%
This asserts the inductive step and hence concludes the proof.
\qed
\end{proof}

\begin{corollary}
The iteration \eqref{eq:ALS_iter} produces an increasing sequence of approximations $0=\hat X_0 \preceq \hat X_1\preceq\dots\preceq X$.
\end{corollary}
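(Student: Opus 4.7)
The plan is to verify two claims separately: first, that the sequence is monotonically nondecreasing in the Loewner ordering, i.e., $\hat X_k \preceq \hat X_{k+1}$ for all $k$; and second, that every iterate is bounded above by the exact solution, i.e., $\hat X_k \preceq X$ for all $k$.

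The first claim is immediate from the update rule \eqref{eq:ALS_iter}, since the increment $\hat X_{k+1} - \hat X_k = v_{k+1} v_{k+1}^T$ is a rank-one positive semidefinite matrix. No induction is needed here.

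For the upper bound, the plan is to invoke Proposition~\ref{prop:residual} to express the error $X^\text{e}_k = X - \hat X_k$ as the solution of $\Lop(X^\text{e}_k) + \Pi(X^\text{e}_k) = -\R_k$. By Theorem~\ref{thm:2.4_gen} the residual satisfies $\R_k = \R_k^T \succeq 0$, so this is a generalized Lyapunov equation of exactly the same form as \eqref{eq:GenLyap}, with the negative semidefinite right-hand side $-\R_k$ in place of $-BB^T$. Since the standing assumptions $A \prec 0$ and $\rho(\Lop^{-1}\Pi) < 1$ are preserved, the same positivity argument that asserts $X \succeq 0$ for \eqref{eq:GenLyap} (as recalled in the introduction, cf.\ \cite[Theorem~3.9]{DammBenner} or \cite[Theorem~4.1]{DammDirectADI}) now yields $X^\text{e}_k \succeq 0$, which is precisely $\hat X_k \preceq X$.

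I do not expect any real obstacle; the only point worth spelling out, if desired, is the positivity preservation of $-(\Lop+\Pi)^{-1}$ on the right-hand side. This follows from the Neumann expansion $(\Lop + \Pi)^{-1} = \sum_{j=0}^\infty (-\Lop^{-1}\Pi)^j \Lop^{-1}$, which converges by $\rho(\Lop^{-1}\Pi) < 1$, together with the facts that $-\Lop^{-1}$ preserves positive semidefiniteness (via the integral representation $-\Lop^{-1}(Q) = \int_0^\infty e^{At} Q e^{A^T t}\,\mathrm{d}t$, valid since $A$ is stable) and that $\Pi$ is a non-negative operator by construction; hence each term in the series maps $\R_k \succeq 0$ to a positive semidefinite matrix, and so does the sum.
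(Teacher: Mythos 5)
Your proof is correct and follows exactly the argument the paper intends but leaves implicit: monotonicity is immediate from the rank-one positive semidefinite updates $v_{k+1}v_{k+1}^T$, and the bound $\hat X_k \preceq X$ follows by applying the positivity of $-(\Lop+\Pi)^{-1}$ (via the Neumann series and the integral representation of $-\Lop^{-1}$) to the error equation of Proposition~\ref{prop:residual} with the right-hand side $\R_k \succeq 0$ guaranteed by Theorem~\ref{thm:2.4_gen}. The explicit justification of the positivity preservation of the solution operator is a welcome addition beyond the paper's citation of the literature.
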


\subsection{$\H2$-optimal model reduction for symmetric state space systems}

For the standard Lyapunov equation it has been shown, in \cite{Benner:2014:optimality}, that minimization of the energy norm induced by the Lyapunov operator (see \cite{vandereycken2010riemannian}) is related to $\H2$-optimal model reduction for linear control systems. As it turns out, a similar conclusion can be drawn for the minimization of the cost functional \eqref{eq:J} and $\H2$-optimal model reduction for symmetric bilinear control systems. In this regard, let us briefly summarize the most important concepts from bilinear model reduction. Given a bilinear system $\Sigma$ as in \eqref{eq:bil_con_sys} with $\mathrm{dim}(\Sigma)=n,$ the goal of model reduction is to construct a surrogate model $\widehat{\Sigma}$ of the form
\begin{align*}%\label{eq:bil_con_sys_red}
\widehat{\Sigma}\colon \left\{ \begin{aligned}
\dot{\widehat{x}}(t) &= \widehat{A}\widehat{x}(t) + \sum_{i=1}^m \widehat{N}_i \widehat{x}(t) w_i(t) + \widehat{B}u(t)\\
\widehat{y}(t) &= \widehat{C} \widehat{x}(t),
\end{aligned}\right.
\end{align*}
with $\widehat{A},\widehat{N}_i \in \RR^{k\times k}, \widehat{B}\in \RR^{n\times k}, C\in \RR^{r\times k}$ and control inputs $u(t)\in\RR^{r}$ and $w(t)\in\RR^{m}$. In particular, the reduced system should satisfy $k\ll n$ and $\widehat{y}(t)\approx y(t)$ in some norm. For the bilinear $\H2$-norm defined in Definition \ref{def:bil_h2}, in  \cite{Benner2012,Flagg2015}, the authors have suggested an algorithm, BIRKA, that iteratively tries to compute a reduced model satisfying first order necessary conditions for $\H2$-optimality. A corresponding pseudocode is given in Algorithm~\ref{alg:BIRKA}.

\begin{algorithm} \label{alg:BIRKA}
%\small
\caption{BIRKA \cite[Algorithm~2]{Benner2012} and \cite[Algorithm~5]{Flagg2015}}
\SetKwInOut{Input}{input}\SetKwInOut{Output}{output}
\Input{System: $A,N_1,\dots,N_m\in\RR^{n\times n}$ $B\in\RR^{n\times r}$, $C\in\RR^{r\times n}$, $\tol$\\Initial guess: $\tilde V, \tilde W \in \RR^{n\times k}$}%$\tilde A, \tilde N_1,\dots,\tilde N_m\in\RR^{k\times k}$ $\tilde B\in\RR^{k\times r}$, $\tilde C\in\RR^{r\times k}$}
\Output{Optimal $\H2$-approximation spaces: $\tilde V^\text{opt},\tilde W^\text{opt} \in \RR^{n\times k}$}%$\tilde A^\text{opt}, \tilde N_1^\text{opt},\dots,\tilde N_m^\text{opt}\in\RR^{k\times k}$, $\tilde B^\text{opt}\in\RR^{k\times r}$, $\tilde C^\text{opt}\in\RR^{r\times k}$}
\BlankLine
\While{Change in eigenvalues of $\tilde A$ larger than $\tol$}{
\nl Update guess $\tilde A = (\tilde W^T \tilde V)^{-1}\tilde W^T A \tilde V$, $\tilde N_1 = (\tilde W^T \tilde V)^{-1}\tilde W^T N_1\tilde V$, \dots, $\tilde N_m = (\tilde W^T \tilde V)^{-1}\tilde W^T N_m\tilde V$, $\tilde B = (\tilde W^T \tilde V)^{-1}\tilde W^T B$, $\tilde C = C\tilde V$\label{alg_step:BIRKA:update}\\
\nl Decompose $R\tilde \Lambda R^{-1} = \tilde A$\label{alg_step:BIRKA:eig}\\
\nl Compute $\hat B = R^{-T}\tilde B$, $\hat C = \tilde C R$, $\hat N_1 = R^{-1} \tilde N_1 R$, \dots, $\hat N_m = R^{-1} \tilde N_m R$\label{alg_step:BIRKA:transform_diag}\\
\nl Solve $\tilde V \tilde \Lambda + A\tilde V + \sum_{i=1}^m N_i \tilde V \hat N_i^T + B\hat B^T = 0$ for $\tilde V$ \label{alg_step:BIRKA:solve_V}\\
\nl Solve $\tilde W \tilde \Lambda + A^T\tilde W + \sum_{i=1}^m N_i^T \tilde W \hat N_i + C^T\hat C = 0$ for $\tilde W$ \label{alg_step:BIRKA:solve_W}\\
\nl Orthogonalize $\tilde V = \orth(\tilde V)$, $\tilde W = \orth(\tilde W)$\label{alg_step:BIRKA:orth}\\
}
\Return $\tilde V^\text{opt} = \tilde V$, $\tilde W^\text{opt} = \tilde W$ %$\tilde A^\text{opt} = \tilde A, \tilde N_1^\text{opt} = \tilde N_1,\dots,\tilde N_m^\text{opt} = \tilde N_m$, $\tilde B^\text{opt} = \tilde B$, $\tilde C^\text{opt} = \tilde C$
\end{algorithm}

%In what follows, we establish a connection between BIRKA for symmetric bilinear systems and ALS. 
% We show that for a symmetric Lyapunov equation with $\rho(\Lop^{-1}\Pi)<1$, computations with BIRKA, with subspace dimension 1, and ALS are equivalent.
%In what follows, we establish a connection between $\H2$-optimal reduction of a bilinear control system and optimality in the $\mathcal{M}$-norm. 
%For this purpose, let us introduce the following generalizations of the operator $\mathcal{M}$:
To establish the connection we introduce the following generalizations of the operator $\mathcal{M}$:
\begin{align*}  
  \widetilde{\mathcal{M}}&\colon \mathbb R^{n\times r} \to \mathbb R^{n\times r}, \quad \widetilde{\mathcal{M}}(X) := - A X -  X  \hat{A} -\sum_{i=1}^mN_i X   \hat{N}_i, \\
  \widehat{\mathcal{M}}&\colon \mathbb R^{r\times r} \to \mathbb R^{r\times r}, \quad  \widehat{\mathcal{M}}(X) := - \hat{A}  X -  X  \hat{A} -\sum_{i=1}^m \hat{N}_i X \hat{N}_i,
\end{align*}
where $\hat{A}=V^T A V, \hat{N}_i=V^TN_i V$ for $i=1,\dots,m$, and $V\in\RR^{n\times r}$ is orthogonal. Our first result is concerned with the invertibility of the operators $\widetilde{\mathcal{M}}$ and $\widehat{\mathcal{M}}$, respectively. 

\begin{proposition}\label{prop:aux_pos_def}
 If $\sigma(\mathcal{M})=-\sigma(\Lop+\Pi)\subset \mathbb C_+$ then  $\sigma(\widetilde{\mathcal{M}}) \subset \mathbb C_+$ and $\sigma(\widehat{\mathcal{M}}) \subset \mathbb C_+$ . 
\end{proposition}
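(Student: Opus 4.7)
The plan is to vectorize all three operators, exploit the standing symmetry assumption to make their matrix representations symmetric, and then exhibit $\widetilde{\mathcal{M}}$ and $\widehat{\mathcal{M}}$ as congruences (compressions) of $\mathcal{M}$ by matrices with orthonormal columns. Since congruence by a matrix of full column rank preserves definiteness, the conclusion will follow from the hypothesis $M \prec 0$, where $M$ is the matrix representation of $\mathcal{M}$.

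Concretely, first I would use $\vec(ABC) = (C^T \otimes A)\vec(B)$ together with the symmetry of $A$, $N_i$, $\hat{A}$, $\hat{N}_i$ to write $\vec(\mathcal{M}(X)) = -M\,\vec(X)$, $\vec(\widetilde{\mathcal{M}}(X)) = -\widetilde{M}\,\vec(X)$, $\vec(\widehat{\mathcal{M}}(X)) = -\widehat{M}\,\vec(X)$, with
\begin{align*}
 M &= I_n\otimes A + A\otimes I_n + \textstyle\sum_i N_i\otimes N_i,\\
 \widetilde{M} &= I_r\otimes A + \hat{A}\otimes I_n + \textstyle\sum_i \hat{N}_i\otimes N_i,\\
 \widehat{M} &= I_r\otimes \hat{A} + \hat{A}\otimes I_r + \textstyle\sum_i \hat{N}_i\otimes \hat{N}_i.
\end{align*}
All three are symmetric matrices, so their spectra are real and coincide with those of the corresponding operators (as $\vec$ is a linear bijection). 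The hypothesis $\sigma(\mathcal{M})\subset\mathbb{C}_+$ then reads $-M \succ 0$, equivalently $M \prec 0$.

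Second, I would introduce $W_1 := V\otimes I_n \in \RR^{n^2\times nr}$ and $W_2 := V\otimes V \in \RR^{n^2\times r^2}$. The mixed-product identity gives $W_1^T W_1 = (V^T V)\otimes I_n = I_{nr}$ and $W_2^T W_2 = (V^T V)\otimes(V^T V) = I_{r^2}$, so both have orthonormal columns. A direct computation using the same identity, together with $\hat A = V^T A V$ and $\hat N_i = V^T N_i V$, yields
\begin{align*}
 W_1^T M W_1 &= I_r\otimes A + \hat{A}\otimes I_n + \textstyle\sum_i \hat{N}_i\otimes N_i = \widetilde{M},\\
 W_2^T M W_2 &= I_r\otimes \hat{A} + \hat{A}\otimes I_r + \textstyle\sum_i \hat{N}_i\otimes \hat{N}_i = \widehat{M}.
\end{align*}

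Since $M \prec 0$ and $W_1$, $W_2$ have full column rank, congruence gives $\widetilde{M} \prec 0$ and $\widehat{M} \prec 0$; equivalently $-\widetilde{M}\succ 0$ and $-\widehat{M}\succ 0$. Therefore $\sigma(\widetilde{\mathcal{M}})$ and $\sigma(\widehat{\mathcal{M}})$ are sets of positive reals and in particular lie in $\mathbb{C}_+$. The only real work is verifying the two Kronecker identities for $W_1^T M W_1$ and $W_2^T M W_2$; everything else is a one-line congruence argument. The symmetry of $\widetilde{M}$ (despite its seemingly asymmetric block structure mixing $n$- and $r$-dimensional factors) is the small subtlety worth double-checking, and it follows immediately from the symmetry of $A$, $N_i$, $\hat{A}$, and $\hat{N}_i$.
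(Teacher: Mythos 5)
Your proof is correct and follows essentially the same route as the paper: vectorize, observe that the symmetry of $A$ and the $N_i$ makes the Kronecker representation of $\mathcal{M}$ symmetric (and positive/negative definite under the spectral hypothesis), and realize $\widetilde{\mathcal{M}}$ and $\widehat{\mathcal{M}}$ as compressions $W^T M W$ with $W$ having orthonormal columns, so that congruence finishes the argument. The only cosmetic difference is your choice $W_1 = V\otimes I_n$ where the paper writes $I\otimes V$ (the two differ by a perfect-shuffle permutation similarity and are equally valid); your explicit verification of the Kronecker identities is, if anything, the more carefully stated of the two.
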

\begin{proof}
%   Assume that $(\lambda,X)\in \mathbb C \times \mathbb R^{2n\times 2n} $ is such that $\mathcal{M}_2(X)=\lambda X$. Consider then the partitioning $X=\begin{bmatrix} X_1 & X_2 \\ X_3 & X_4 \end{bmatrix}$ with $X_i \in \mathbb R^n$. Due to the block structure of the matrices defining $\mathcal{M}_2$ it follows that $\mathcal{M}(X_1)=\lambda X_1$. Since $\sigma(\mathcal{M})\subset \mathbb C_+$ the first assertion follows. 
  
%   For the second assertion,
  Note that $\sigma(\widetilde{\mathcal{M}})$ is determined by the eigenvalues of the matrix 
  \begin{align}\label{eq:aux_Mtilde}
   \widetilde{\mathbf{M}} := -I\otimes A  - \hat{A} \otimes I - \sum_{i=1}^m \hat{N}_i \otimes  N_i .
  \end{align}
  Similarly, we obtain  $\sigma(\mathcal{M})$ by computing the eigenvalues of the matrix 
  \begin{align}\label{eq:aux_Mhat}
   \mathbf{M} := -I\otimes A - A   \otimes I - \sum_{i=1}^m  N_i\otimes  N_i .
  \end{align}
Since $A$ and $N_i$ are assumed to be symmetric, we conclude that $\mathbf{M}=\mathbf{M}^T\succ 0$. Let us then define the orthogonal matrix $\mathbf{V}=  I\otimes V$. It follows that $\widetilde{\mathbf{M}}=\mathbf{V}^T \mathbf{M} \mathbf{V}$ and, consequently, $\widetilde{\mathbf{M}} =\widetilde{\mathbf{M}}^T\succ 0$. A similar argument with $\mathbf{V}=V\otimes V$ can be applied to show the second assertion.
\qed
\end{proof}

% Similar to the linear case discussed in \cite{vandereycken2010riemannian} and \cite{Benner:2014:optimality}, the ALS scheme for locally minimizing the error in the $\mathcal{M}$-induced norm can be related to optimal $\H2$-model reduction of bilinear systems. 

Given a reduced bilinear system, we naturally obtain an approximate solution to the generalized Lyapunov equation. Moreover, the error with respect to the $\mathcal{M}$ inner product is given by the $\H2$-norms of the original and reduced system, respectively.
\begin{proposition}\label{prop:ip_h2}
  Let $\Sigma$ denote a bilinear system \eqref{eq:bil_con_sys} with $A=A^T\prec 0,N_i=N_i^T, i=1,\dots,m$ and $B=C^T$ be given. Assume that $\rho(\Lop^{-1}\Pi)<1.$ Given an orthogonal  $V\in \RR^{n \times k},k< n,$ define a reduced bilinear system $\hat{\Sigma}$ via $\hat{A}=V^T A V, \hat{N}_i=V^TN_i V$ and $\hat{B}=V^TB=\hat{C}^T.$ Let $X$ be the solution to \eqref{eq:GenLyap}, and let $\hat{X}$ be the solution to $\hat{A}\hat{X}+\hat{X}\hat{A}+\sum_{i=1}^m \hat{N}_i \hat{X} \hat{N}_i +\hat{B}\hat{B}^T=0.$ Then 
  \begin{align*}
    \| X-V\hat{X}V^T \|_{\mathcal{M}}^2 = \| \Sigma \|_{\H2}^2  - \| \hat{\Sigma} \|_{\H2}^2.
  \end{align*}
\end{proposition}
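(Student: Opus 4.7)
The plan is to expand the squared $\mathcal{M}$-norm using bilinearity,
\[
\| X - V\hat{X}V^T \|_{\mathcal{M}}^2 = \langle X,X\rangle_{\mathcal{M}} - 2\langle X, V\hat{X}V^T\rangle_{\mathcal{M}} + \langle V\hat{X}V^T, V\hat{X}V^T\rangle_{\mathcal{M}},
\]
and then identify each of the three inner products in turn. The symmetry assumptions ($A=A^T$, $N_i=N_i^T$, $B=C^T$) together with those on the reduced system will make $\hat{X}=\hat{X}^T$ and $\mathcal{M}$, $\widehat{\mathcal{M}}$ self-adjoint with respect to the Frobenius inner product, which is the structural fact driving the proof.

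First I would use that $\mathcal{M}(X)=BB^T$ by the defining equation \eqref{eq:GenLyap}, so that
\[
\langle X,X\rangle_{\mathcal{M}} = \trace(X^T BB^T) = \trace(B^T X B) = \|\Sigma\|_{\H2}^2,
\]
where the last equality is the symmetric-system version of \cite[Theorem~6]{ZHANG2002} (recall $P=Q=X$ when $B=C^T$ and the system is symmetric). Exactly the same computation at the reduced level gives $\langle \hat{X},\hat{X}\rangle_{\widehat{\mathcal{M}}} = \|\hat{\Sigma}\|_{\H2}^2$.

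Next, I would verify that $\mathcal{M}$ is self-adjoint with respect to the Frobenius inner product by a short calculation that uses symmetry of $A$ and $N_i$ together with cyclicity of the trace. Applying this and $\mathcal{M}(X)=BB^T$ yields
\[
\langle X,V\hat{X}V^T\rangle_{\mathcal{M}} = \trace\bigl(BB^T V\hat{X}V^T\bigr) = \trace\bigl(\hat{B}^T \hat{X}\hat{B}\bigr) = \|\hat{\Sigma}\|_{\H2}^2,
\]
using $\hat{B}=V^TB$.

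For the remaining term, I would push the outer factors $V,V^T$ through using $V^TV=I$ and the definitions $\hat{A}=V^TAV$, $\hat{N}_i=V^TN_iV$ to obtain
\[
\langle V\hat{X}V^T, V\hat{X}V^T\rangle_{\mathcal{M}} = \trace\bigl(\hat{X}\,\widehat{\mathcal{M}}(\hat{X})\bigr) = \langle \hat{X},\hat{X}\rangle_{\widehat{\mathcal{M}}} = \|\hat{\Sigma}\|_{\H2}^2.
\]
(Proposition~\ref{prop:aux_pos_def} ensures the reduced equation is well-posed so that $\hat{X}$ exists.) Combining these three identities gives
\[
\| X - V\hat{X}V^T \|_{\mathcal{M}}^2 = \|\Sigma\|_{\H2}^2 - 2\|\hat{\Sigma}\|_{\H2}^2 + \|\hat{\Sigma}\|_{\H2}^2 = \|\Sigma\|_{\H2}^2 - \|\hat{\Sigma}\|_{\H2}^2,
\]
as desired. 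The main obstacle is clerical rather than conceptual: one must be careful in the last identity to correctly collapse the $V^T V=I$ cancellations across the three summands of $\mathcal{M}$ to recognize $\widehat{\mathcal{M}}(\hat{X})$, and in the second identity to use self-adjointness of $\mathcal{M}$ rather than confusing it with the non-symmetric bilinear form.
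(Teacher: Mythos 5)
Your proposal is correct and follows essentially the same route as the paper: both rest on $\mathcal{M}(X)=BB^T$, the self-adjointness of $\mathcal{M}$ under the symmetry assumptions, the identity $\widehat{\mathcal{M}}(\hat{X})=V^T\mathcal{M}(V\hat{X}V^T)V$, and the trace formula $\|\Sigma\|_{\H2}^2=\trace(B^TXB)$. The paper merely organizes the expansion of $\langle X-V\hat{X}V^T,X-V\hat{X}V^T\rangle_{\mathcal{M}}$ sequentially rather than as three separate terms, so the two arguments are interchangeable.
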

\begin{proof}
  By assumption it holds that the controllability Gramian $X$ exists and the operator $\mathcal{M}$ is invertible. Moreover, for the reduced system we obtain
  \begin{align*}
\widehat{\mathcal{M}}(\hat{X}) &= -\hat{A} \hat{X} - \hat{X}\hat{A} - \sum_{i=1}^m \hat{N}_i \hat{X} \hat{N}_i 
= -V^T ( AV\hat{X}V^T + V\hat{X}V^T A + \sum_{i=1}^m N_i V\hat{X}V^T N_i ) V \\
&= V^T \mathcal{M}(V\hat{X}V^T)V .
\end{align*}
Hence, we obtain 
\begin{align*}
   \langle X-V\hat{X}V^T , X -& V\hat{X} V^T \rangle_{\mathcal{M}} 
   = \langle X-V\hat{X}V^T, \mathcal{M}(X)-\mathcal{M}(V\hat{X}V^T) \rangle \\
  & = \langle X-V\hat{X}V^T, \mathcal{M}(X) \rangle - \langle X-V\hat{X}V^T, \mathcal{M}(V\hat{X}V^T) \rangle \\ 
  & = \langle X-V\hat{X}V^T, BB^T \rangle - \langle \mathcal{M}(X),V\hat{X}V^T \rangle + \langle V\hat{X}V^T, \mathcal{M}(V\hat{X}V^T)\rangle \\ 
  & = \trace(B^TXB) -\trace(B^TV\hat{X}V^TB) \\&\quad- \trace(B^T V\hat{X}V^TB)+\trace(\hat{X}V^T \mathcal{M}(V\hat{X}V^T)V) \\
  & = \| \Sigma\| _{\H2}^2 - 2 \| \hat{\Sigma}\|_{\H2}^2 + \trace(\hat{X} \widehat{\mathcal{M}}(\hat{X}) ) 
   = \| \Sigma\| _{\H2}^2 -  \| \hat{\Sigma}\|_{\H2}^2.
\end{align*}
\qed
\end{proof}

Extending the results from \cite{Benner:2014:optimality}, we obtain a lower bound for the previous terms by means of the $\H2$-norm of the error system $\Sigma-\widehat{\Sigma}$.

\begin{proposition}\label{prop:err_sys_low_bound}
  Let $\Sigma$ denote a bilinear system \eqref{eq:bil_con_sys} with $A=A^T\prec 0,N_i=N_i^T, i=1,\dots,m$ and $B=C^T$ be given. Assume that $\rho(\Lop^{-1}\Pi)<1$. Given an orthogonal  $V\in \RR^{n \times k},k< n,$ define a reduced bilinear system $\widehat{\Sigma}$ via $\hat{A}=V^T A V, \hat{N}_i=V^TN_i V$ and $\hat{B}=V^TB=\hat{C}^T.$ Then, it holds
  \begin{align*}
    \| \Sigma- \hat{\Sigma} \|_{\H2}^2 \le \| \Sigma \|_{\H2}^2  - \| \hat{\Sigma} \|_{\H2}^2,
  \end{align*}
  with equality if $\widehat{\Sigma}$ is locally $\H2$-optimal.
\end{proposition}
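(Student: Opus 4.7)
The plan is to compute $\|\Sigma-\hat\Sigma\|_{\H2}^2$ from the controllability Gramian of the error system and then bound the resulting cross term by an energy-norm estimate analogous to Proposition~\ref{prop:ip_h2}.

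First, I would form the error system by setting $A_\text{e}=\blkdiag(A,\hat A)$, $N_{i,\text{e}}=\blkdiag(N_i,\hat N_i)$, $B_\text{e}=\begin{pmatrix} B \\ \hat B \end{pmatrix}$, and $C_\text{e}=\begin{pmatrix} C & -\hat C \end{pmatrix}$. The generalized controllability Gramian $P_\text{e}$ from Definition~\ref{def:bil_gramians} decouples blockwise into $X$ (the solution to \eqref{eq:GenLyap}), $\hat X$ (the reduced solution), and an off-diagonal block $Y\in\RR^{n\times k}$ that solves the Sylvester-type equation $AY+Y\hat A+\sum_{i=1}^m N_iY\hat N_i + B\hat B^T = 0$. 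Using $C=B^T$, $\hat C=\hat B^T$, the identity $\|\Sigma-\hat\Sigma\|_{\H2}^2 = \trace(C_\text{e} P_\text{e} C_\text{e}^T)$ expands to
\begin{align*}
 \|\Sigma-\hat\Sigma\|_{\H2}^2 = \|\Sigma\|_{\H2}^2 - 2\trace(CY\hat C^T) + \|\hat\Sigma\|_{\H2}^2,
\end{align*}
so the inequality is equivalent to the lower bound $\trace(CY\hat C^T)\ge \|\hat\Sigma\|_{\H2}^2$.

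For this lower bound I would exploit that the operator $\widetilde{\mathcal M}$ from Proposition~\ref{prop:aux_pos_def} is, in the symmetric setting, self-adjoint and positive definite on $\RR^{n\times k}$ with respect to the Frobenius inner product, and hence induces an energy norm $\|\cdot\|_{\widetilde{\mathcal M}}$ with $Y$ characterized by $\widetilde{\mathcal M}(Y)=B\hat B^T$. A short calculation using orthogonality of $V$ and the reduced Lyapunov equation for $\hat X$ gives the Galerkin-consistency identity $V^T\widetilde{\mathcal M}(V\hat X)=\hat B\hat B^T$; this in turn reduces both $\langle V\hat X,\widetilde{\mathcal M}(Y)\rangle$ and $\langle V\hat X,\widetilde{\mathcal M}(V\hat X)\rangle$ to $\|\hat\Sigma\|_{\H2}^2$, while $\langle Y,\widetilde{\mathcal M}(Y)\rangle = \trace(Y^T B\hat B^T)= \trace(CY\hat C^T)$. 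Expanding
\begin{align*}
 0\le \|Y-V\hat X\|_{\widetilde{\mathcal M}}^2 = \trace(CY\hat C^T) - 2\|\hat\Sigma\|_{\H2}^2 + \|\hat\Sigma\|_{\H2}^2
\end{align*}
then yields precisely the desired estimate.

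For equality when $\hat\Sigma$ is locally $\H2$-optimal, I would invoke the first-order necessary conditions for bilinear $\H2$-model reduction from \cite{Benner2012,Flagg2015}: in the symmetric, one-sided setting these force $\vrange(Y)\subseteq \vrange(V)$, so $Y=VM$ for some $M\in\RR^{k\times k}$; premultiplying the Sylvester equation for $Y$ by $V^T$ then shows that $M$ satisfies the reduced Lyapunov equation, whose unique solution is $\hat X$, hence $Y=V\hat X$ and $\|Y-V\hat X\|_{\widetilde{\mathcal M}}=0$. The main obstacles I expect are verifying the Galerkin-consistency identity $V^T\widetilde{\mathcal M}(V\hat X)=\hat B\hat B^T$ and self-adjointness of $\widetilde{\mathcal M}$, both of which hinge on symmetry of $A$ and the $N_i$, and rigorously tying the bilinear $\H2$-optimality conditions in the symmetric case to the range inclusion $\vrange(Y)\subseteq \vrange(V)$.
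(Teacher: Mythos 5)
Your proposal is correct and follows the paper's skeleton exactly up to the key inequality: same error system, same block decomposition of the Gramian into $X$, $Y$, $\hat X$, and the same reduction to the cross-term bound $\trace(B^TY\hat B)\ge\trace(\hat B^T\hat X\hat B)$. Where you diverge is in how that bound is established. The paper vectorizes, writes the difference as $\mathrm{vec}(B\hat B^T)^T\bigl(\widetilde{\mathbf M}^{-1}-\mathbf V(\mathbf V^T\widetilde{\mathbf M}\mathbf V)^{-1}\mathbf V^T\bigr)\mathrm{vec}(B\hat B^T)$, and cites the positive semidefiniteness of a Schur complement from \cite{Benner:2014:optimality} without supplying details. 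Your completed square $0\le\|Y-V\hat X\|_{\widetilde{\mathcal M}}^2$ is precisely the omitted verification of that Schur-complement fact, carried out at the matrix level: it is the standard ``Galerkin projection underestimates the energy'' argument, and your Galerkin-consistency identity $V^T\widetilde{\mathcal M}(V\hat X)=\hat B\hat B^T$ together with the self-adjointness of $\widetilde{\mathcal M}$ (both immediate from $A=A^T$, $N_i=N_i^T$, and Proposition~\ref{prop:aux_pos_def}) closes it cleanly, so your rendition is arguably more self-contained. For the equality case the routes genuinely differ: the paper uses the trace form of the Wilson-type conditions from \cite{ZHANG2002}, deduces $\hat Z=\hat X$ and $Z=-Y$ from symmetry, and reads off $-Y^TB+\hat X\hat B=0$ directly, which kills the cross term with no rank hypotheses. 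Your route via the range inclusion $\vrange(Y)\subseteq\vrange(V)$ and $Y=V\hat X$ is also valid and fits your energy-norm framing perfectly (it makes $\|Y-V\hat X\|_{\widetilde{\mathcal M}}=0$ manifest), but the equivalence between the Wilson conditions and the subspace conditions requires $Y$ to have full column rank, a caveat you correctly flag; the paper's trace-based argument sidesteps this, so if you want to avoid the rank assumption you should adopt that version for the equality part.
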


\begin{proof}
The proof follows by arguments similar to those used in \cite[Lemma 3.1]{Benner:2014:optimality}.
  By definition of the $\H2$-norm for bilinear systems we have 
  \begin{align*}
    \| \Sigma- \widehat{\Sigma}\|_{\H2}^2 = \trace(\begin{bmatrix} B^T & -\hat{B}^T \end{bmatrix} X_e \begin{bmatrix} B \\ -\hat{B} \end{bmatrix}) ,
  \end{align*}
where $X_e= \begin{bmatrix} X & Y \\ Y^T & \hat{X}\end{bmatrix}$ is the solution of 
\begin{align*}%\label{eq:aux_err_gramian}
  \begin{bmatrix} A & 0\\ 0 & \hat{A} \end{bmatrix} X_e + X_e \begin{bmatrix} A & 0\\ 0 & \hat{A} \end{bmatrix} + \sum_{i=1}^m \begin{bmatrix} N_i & 0\\ 0 & \hat{N}_i \end{bmatrix} X_e \begin{bmatrix} N_i & 0\\ 0 & \hat{N}_i \end{bmatrix} + \begin{bmatrix} B \\ \hat{B} \end{bmatrix}\begin{bmatrix} B^T & \hat{B}^T \end{bmatrix}=0.
\end{align*}
Analyzing the block structure of $X_e$ we thus find the equivalent expression 
\begin{align*}
\| \Sigma- \hat{\Sigma}\|_{\H2}^2 &= \trace(B^TXB-\hat{B}^T \hat{X}\hat{B} - 2 \begin{bmatrix} B^T,-\hat{B}^T\end{bmatrix} \begin{bmatrix}Y \\ \hat{X}\end{bmatrix} \hat{B} )\\
&= \|\Sigma\|_{\H2}^2 - \| \hat{\Sigma}\|^2_{\H2}-2\trace( \begin{bmatrix} B^T,-\hat{B}^T\end{bmatrix} \begin{bmatrix}Y \\ \hat{X}\end{bmatrix} \hat{B} ).
\end{align*}
We claim that $\trace( \begin{bmatrix} B^T,-\hat{B}^T\end{bmatrix} \begin{bmatrix}Y \\ \hat{X}\end{bmatrix} \hat{B} )\ge 0$ which then shows the first assertion. In fact, $Y$ and $\hat{X}$ are the solutions of
\begin{align*}
 AY + Y\hat{A} + \sum_{i=1}^m N_i Y \hat{N}_i + B\hat{B}^T&=0 \\
 \hat{A}\hat{X} + \hat{X}\hat{A} + \sum_{i=1}^m \hat{N}_i \hat{X} \hat{N}_i + \hat{B}\hat{B}^T&=0.
\end{align*}
With the operators introduced in \eqref{eq:aux_Mtilde} and \eqref{eq:aux_Mhat}, we obtain 
\begin{align*}
 \trace(B^T Y \hat{B}) -& \trace(\hat{B}^T \hat{X}\hat{B}) = \mathrm{vec}(B\hat{B}^T)^T \mathrm{vec}(Y) -  \mathrm{vec}(\hat{B}\hat{B}^T)^T \mathrm{vec}(\hat{X}) \\
 &=  \mathrm{vec}(B\hat{B}^T)^T \widetilde{\mathbf{M}}^{-1}\mathrm{vec}(B\hat{B}^T) -  \mathrm{vec}(\hat{B}\hat{B}^T)^T \widehat{\mathbf{M}}^{-1}\mathrm{vec}(\hat{B}\hat{B}^T) \\
 &= \mathrm{vec}(B\hat{B}^T)^T\left( \widetilde{\mathbf{M}}^{-1} - \mathbf{V} ( \mathbf{V}^T \widetilde{\mathbf{M}} \mathbf{V})^{-1} \mathbf{V} \right)\mathrm{vec}(B\hat{B}^T).
\end{align*}
As in \cite[Lemma 3.1]{Benner:2014:optimality}, it follows that the previous expression contains the Schur complement of $\widetilde{\mathbf{M}}^{-1}$ in $\mathbf{S}= \begin{bmatrix}\mathbf{V}^T \widetilde{\mathbf{M}} \mathbf{V} & \mathbf{V}^T \\ \mathbf{V} & \widetilde{\mathbf{M}}^{-1} \end{bmatrix} $ which can be shown to be positive semidefinite. We omit the details here and instead refer to \cite{Benner:2014:optimality}. 

Assume now that $\widehat{\Sigma}$ is locally $\H2$-optimal. From \cite{ZHANG2002}, we have the following first-order necessary optimality conditions
\begin{align*}
  Y^T Z + \hat{X} \hat{Z} &= 0, \quad 
  Z^T N_iY + \hat{X}\hat{N}_i\hat{Z}=0, \ \ i=1,\dots,m, \\ 
  Z^TB + \hat{Z}\hat{B} &=0, \quad  CY+\hat{C}\hat{X} =0,
\end{align*}
where $Y,\hat{X}$ are as before and $Z,\hat{Z}$ satisfy 
\begin{align*}
 A^T Z + Z\hat{A} + \sum_{i=1}^m N_i^T Z \hat{N}_i -C^T \hat{C } &=0, \\
 \hat{A}^T \hat{Z} + \hat{Z}\hat{A} + \sum_{i=1}^m \hat{N}^T_i \hat{Z} \hat{N}_i + \hat{C}^T \hat{C} &=0. 
\end{align*}
From the symmetry of $A,\hat{A},N_i$ and $\hat{N}_i$ as well as the fact that $B=C^T$ and $\hat{B}=\hat{C}^T$, we conclude that $\hat{Z}=\hat{X}$ and $Z=-Y$. Hence, from the optimality conditions, we obtain 
\begin{align*}
 0=Z^TB+\hat{Z}\hat{B}=-Y^TB+\hat{X}\hat{B} 
\end{align*}
which in particular implies that $\trace( \begin{bmatrix} B^T,-\hat{B}^T\end{bmatrix} \begin{bmatrix}Y \\ \hat{X}\end{bmatrix} \hat{B} )= 0.$ This shows the second assertion.
\qed
\end{proof}
As a consequence from Proposition \ref{prop:ip_h2} and Proposition \ref{prop:err_sys_low_bound}, we obtain the following result.
 
\begin{theorem}
Let $\Sigma$ denote a bilinear system \eqref{eq:bil_con_sys} with $A=A^T\prec 0,N_i=N_i^T, i=1,\dots,m$ and $B=C^T$ be given. Assume that $\rho(\Lop^{-1}\Pi)<1$. Given an orthogonal  $V\in \RR^{n \times k},k< n,$ define a reduced bilinear system $\widehat{\Sigma}$ via $\hat{A}=V^T A V, \hat{N}_i=V^TN_i V$ and $\hat{B}=V^TB=\hat{C}^T.$ Assume that $\hat{X}$ solves $\hat{A}\hat{X}+\hat{X}\hat{A}+\sum_{i=1}^m \hat{N}_i \hat{X} \hat{N}_i^T +\hat{B}\hat{B}^T=0.$ If $\widehat{\Sigma}$ is locally $\H2$-optimal, then $V\hat{X}V^T$ is locally optimal with respect to the $\mathcal{M}$-norm.
\end{theorem}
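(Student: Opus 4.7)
The plan is to chain together the two immediately preceding propositions: Proposition~\ref{prop:ip_h2} identifies the $\mathcal{M}$-norm error with the difference $\|\Sigma\|_{\H2}^2 - \|\widehat{\Sigma}\|_{\H2}^2$, while Proposition~\ref{prop:err_sys_low_bound} bounds the $\H2$ error-system norm by the same quantity, with equality precisely at an $\H2$-local optimum. Together these sandwich the two error measures in a way that should immediately transfer local optimality from the $\H2$-sense to the $\mathcal{M}$-sense.

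First I would parametrize, for orthogonal $V$ in a neighborhood $\mathcal{U}$ of the given $V_*$ (for which $\widehat{\Sigma}=\widehat{\Sigma}(V_*)$ is locally $\H2$-optimal), the reduced system $\widehat{\Sigma}(V)$ together with its reduced Gramian $\hat{X}(V)$. On $\mathcal{U}$, Proposition~\ref{prop:ip_h2} gives
\begin{equation*}
\|X - V\hat{X}(V)V^T\|_{\mathcal{M}}^2 = \|\Sigma\|_{\H2}^2 - \|\widehat{\Sigma}(V)\|_{\H2}^2,
\end{equation*}
and Proposition~\ref{prop:err_sys_low_bound} gives
\begin{equation*}
\|\Sigma - \widehat{\Sigma}(V)\|_{\H2}^2 \le \|\Sigma\|_{\H2}^2 - \|\widehat{\Sigma}(V)\|_{\H2}^2,
\end{equation*}
with equality at $V=V_*$ by the local $\H2$-optimality assumption.

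Next I would combine these into a single chain valid for every $V\in\mathcal{U}$:
\begin{align*}
\|X - V_*\hat{X}(V_*)V_*^T\|_{\mathcal{M}}^2
&= \|\Sigma - \widehat{\Sigma}(V_*)\|_{\H2}^2 \\
&\le \|\Sigma - \widehat{\Sigma}(V)\|_{\H2}^2 \\
&\le \|\Sigma\|_{\H2}^2 - \|\widehat{\Sigma}(V)\|_{\H2}^2 \\
&= \|X - V\hat{X}(V)V^T\|_{\mathcal{M}}^2,
\end{align*}
where the first and last equalities come from Propositions~\ref{prop:ip_h2} and~\ref{prop:err_sys_low_bound} applied at $V_*$ and $V$ respectively, the first inequality is the local $\H2$-optimality of $\widehat{\Sigma}(V_*)$, and the second inequality is Proposition~\ref{prop:err_sys_low_bound} at the generic $V$. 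This asserts that $V_*\hat{X}(V_*)V_*^T$ is a local minimum of $V\mapsto \|X-V\hat{X}(V)V^T\|_{\mathcal{M}}^2$, which is exactly the desired $\mathcal{M}$-norm local optimality.

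I do not expect any real obstacle; the argument is essentially an inequality chain built from results already established. The only subtlety worth noting is that \emph{local} must be compatible on both sides: $\hat{X}(V)$ should depend continuously on $V$ in a neighborhood of $V_*$, which follows from the preservation of $\rho(\Lop^{-1}\Pi)<1$ under small orthogonal perturbations of the projection (cf.\ Proposition~\ref{prop:aux_pos_def}), so that the projected reduced equation remains well-posed and its solution varies smoothly in a neighborhood of the optimum.
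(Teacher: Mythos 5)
Your proposal is correct and follows exactly the route the paper intends: the theorem is stated there as an immediate consequence of Propositions~\ref{prop:ip_h2} and~\ref{prop:err_sys_low_bound}, and your inequality chain (using the equality case of Proposition~\ref{prop:err_sys_low_bound} at the local $\H2$-optimum $V_*$ and the inequality at nearby $V$) is precisely the argument left implicit in the text. The remark about well-posedness and continuity of $V\mapsto\hat{X}(V)$ near $V_*$, via Proposition~\ref{prop:aux_pos_def}, is a sensible addition rather than a deviation.
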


\subsection{Equivalence of ALS and rank-1 BIRKA}
%Based on Corollary~\ref{cor:sym_error_gramian} and \ref{cor:H2_M_norm}, a natural idea would be to apply model reduction techniques to compute an small subspace for the bilinear system $\Sigma^\text{e}$, and use that subspace to update the approximation $\hat X_k$.
% The bilinear iterative rational Krylov (BIRKA), see \cite{Benner2012,Flagg2015}, is one such algorithm, of which the pseudocode is given in Algorithm~\ref{alg:BIRKA}.
%We show that for a symmetric generalized Lyapunov equation with $\rho(\Lop^{-1}\Pi)<1$, computations with BIRKA, with subspace dimension 1, and ALS are equivalent.
So far we have shown that having a subspace producing a locally $\H2$-optimal model reduction implies having a subspace for which the Galerkin approximation is locally optimal in the $\mathcal{M}$-norm.
In this part we, algorithmically, establish an equivalence between BIRKA and ALS. More precisely, for the symmetric case the equivalence is between BIRKA applied with the target model reduction subspace of dimension 1 for \eqref{eq:bil_con_sys}, and ALS applied to \eqref{eq:GenLyap}. The proof is based on the following lemmas.

\begin{lemma}\label{lem:rank1_BIRKA}
Consider using BIRKA (Algorithm~\ref{alg:BIRKA}) with $k=1$, i.e., having initial guesses and output being vectors. Then $\tilde A\in\RR$ is a scalar and hence we can take $\tilde \Lambda = \tilde A$ and $R=1$ in Step~\ref{alg_step:BIRKA:eig}. Thus $\hat B = \tilde B$, $\hat C = \tilde C$, $\hat N_1 = \tilde N_1$, \dots, $\hat N_m = \tilde N_m$, and hence Steps~\ref{alg_step:BIRKA:eig}-\ref{alg_step:BIRKA:transform_diag} are redundant. Moreover, since $\tilde V$ and $\tilde W$ are vectors, Step~\ref{alg_step:BIRKA:orth}, is redundant.
\end{lemma}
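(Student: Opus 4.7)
My plan is to verify each of the three assertions by direct inspection of Algorithm~\ref{alg:BIRKA} specialized to $k=1$; every claim ultimately reduces to the observation that $1\times 1$ matrices behave trivially, so no substantial ideas are required beyond dimension tracking. The first point is immediate: with $k=1$ the inputs $\tilde V, \tilde W \in \RR^{n\times 1}$ are column vectors, so every factor of the form $\tilde W^T \tilde V$, $\tilde W^T A \tilde V$, and $\tilde W^T N_i \tilde V$ appearing in Step~\ref{alg_step:BIRKA:update} is a scalar. Hence $\tilde A \in \RR$ and likewise $\tilde N_1, \dots, \tilde N_m \in \RR$.

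Next, I would observe that any $1\times 1$ matrix is already in eigendecomposed form, so the choice $R = 1$ and $\tilde \Lambda = \tilde A$ trivially satisfies $R \tilde \Lambda R^{-1} = \tilde A$ in Step~\ref{alg_step:BIRKA:eig}. Plugging $R = 1$ into Step~\ref{alg_step:BIRKA:transform_diag} gives $\hat B = R^{-T}\tilde B = \tilde B$, $\hat C = \tilde C R = \tilde C$, and $\hat N_i = R^{-1}\tilde N_i R = \tilde N_i$ for each $i$. Thus with these choices Steps~\ref{alg_step:BIRKA:eig}--\ref{alg_step:BIRKA:transform_diag} leave every reduced quantity unchanged and may be omitted.

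Finally, when $\tilde V, \tilde W$ are single vectors, $\orth$ in Step~\ref{alg_step:BIRKA:orth} amounts to dividing each vector by its norm, which does not change the one-dimensional spans $\vspan(\tilde V)$ and $\vspan(\tilde W)$. To see that the step is genuinely redundant for the iteration and not only for the final spans, I would note that the scalars $\tilde A$ and $\tilde N_i$ computed at the next execution of Step~\ref{alg_step:BIRKA:update} are all of the form $(\tilde W^T \tilde V)^{-1} \tilde W^T M \tilde V$ and are therefore invariant under independent nonzero rescalings of $\tilde V$ and $\tilde W$; the quantities $\tilde B$ and $\tilde C$ do rescale, but by inspection of Steps~\ref{alg_step:BIRKA:solve_V}--\ref{alg_step:BIRKA:solve_W}, which are linear in $\tilde V$ and $\tilde W$ respectively, these rescalings merely produce scaled versions of the same output vectors. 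Consequently, removing Step~\ref{alg_step:BIRKA:orth} changes only the scaling of the iterates and leaves the generated sequence of subspaces intact. The only care needed in the proof is to make this scale-invariance argument explicit, but this is a routine bookkeeping check rather than a substantive obstacle.
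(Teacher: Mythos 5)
Your proposal is correct and follows the same route as the paper, whose entire proof is ``by direct computation'': you simply spell out the dimension-tracking that the authors leave implicit. Your extra care with Step~\ref{alg_step:BIRKA:orth} --- noting that $\orth$ rescales the vectors but that all subsequently computed quantities are invariant under such rescalings up to a harmless scaling of the iterates --- is a welcome refinement the paper glosses over, but it does not constitute a different argument.
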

\begin{proof}
By direct computation.
\qed
\end{proof}

When speaking about \emph{redundant} steps and operations we mean that the entities assigned in that step is exactly equal to another, existing, entity. In such a situation the algorithm can be rewritten, by simply changing the notation, in a way that skips the redundant step and still produces the same result.

\begin{lemma}\label{lem:sym_BIRKA}
Consider a symmetric generalized Lyapunov equation and let $v,w\in\RR^{n}$ be two given vectors.
Let $v_\textsc{birka},w_\textsc{birka}\in\RR^{n}$ be the approximations obtained by applying BIRKA (Algorithm~\ref{alg:BIRKA}) to \eqref{eq:GenLyap} with $C = B^T$ and initial guesses $v$ and $w$. If $v=w$, then $v_\textsc{birka} = w_\textsc{birka}$.
\end{lemma}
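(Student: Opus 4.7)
My plan is an induction on the iteration count within BIRKA's main loop. The base case is the hypothesis $v=w$, and the inductive step claims that if $\tilde V=\tilde W$ at the start of some iteration of Algorithm~\ref{alg:BIRKA}, then $\tilde V=\tilde W$ at its end as well. Granting this, the equality is preserved throughout the loop and hence $v_\textsc{birka}=w_\textsc{birka}$ at termination.

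For the inductive step, let $\tilde V=\tilde W=v$. By Lemma~\ref{lem:rank1_BIRKA} all quantities produced by Step~\ref{alg_step:BIRKA:update} are scalars or small-dimensional, and Steps~\ref{alg_step:BIRKA:eig}--\ref{alg_step:BIRKA:transform_diag} may be skipped. Using $C=B^T$ one obtains in particular
\begin{equation*}
\tilde A=\frac{v^TAv}{v^Tv},\qquad \tilde N_i=\frac{v^TN_iv}{v^Tv},\qquad \tilde B^T=\frac{1}{v^Tv}\,B^Tv,\qquad \tilde C=Cv=B^Tv,
\end{equation*}
so that $\tilde C=(v^Tv)\tilde B^T$. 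In the rank-1 regime, Step~\ref{alg_step:BIRKA:solve_V} reduces to
\begin{equation*}
\Bigl(A+\tilde AI+\textstyle\sum_{i=1}^m\tilde N_iN_i\Bigr)v_{\text{new}}=-B\tilde B^T,
\end{equation*}
and Step~\ref{alg_step:BIRKA:solve_W} to
\begin{equation*}
\Bigl(A^T+\tilde AI+\textstyle\sum_{i=1}^m\tilde N_iN_i^T\Bigr)w_{\text{new}}=-C^T\tilde C.
\end{equation*}
The symmetry of $A$ and each $N_i$ makes the two coefficient matrices identical, while $C^T\tilde C=B\bigl((v^Tv)\tilde B^T\bigr)=(v^Tv)B\tilde B^T$; hence $w_{\text{new}}=(v^Tv)v_{\text{new}}$, and the two Sylvester solves return positively parallel vectors.

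Finally, $\orth$ applied to a single nonzero vector returns its unit-length representative (up to a consistent choice of sign), so Step~\ref{alg_step:BIRKA:orth} sends $v_{\text{new}}$ and $w_{\text{new}}$ to the same unit vector, closing the induction. The main obstacle is mild and essentially bookkeeping: one has to trace how $C=B^T$ propagates through Step~\ref{alg_step:BIRKA:update} to produce the identity $\tilde C=(v^Tv)\tilde B^T$ between the right-hand sides. Once this parallelism is in place, the matching coefficient matrices (courtesy of the symmetry of $A$ and $N_i$) take care of the rest.
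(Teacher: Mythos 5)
Your proposal is correct and follows essentially the same route as the paper: induction over the main loop, Lemma~\ref{lem:rank1_BIRKA} to discard Steps~\ref{alg_step:BIRKA:eig}--\ref{alg_step:BIRKA:transform_diag}, and the symmetry of $A$, $N_i$ together with $C=B^T$ to identify the two Sylvester solves. You are in fact slightly more careful than the paper in tracking the scaling factor $(v^Tv)$ between $\tilde C$ and $\tilde B^T$ and letting Step~\ref{alg_step:BIRKA:orth} absorb it, where the paper implicitly assumes normalized iterates and asserts the two solves are identical.
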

\begin{proof}
The proof is by induction, and it suffices to show that if $\tilde V = \tilde W$ at the beginning of a loop, the same holds at the end of the loop. Thus assume $\tilde V = \tilde W$. Then $\tilde N_i = (\tilde W^T \tilde V)^{-1} \tilde W^T N_i \tilde V = \tilde V^T N_i \tilde V/\|V\|^2 = \tilde V^T N_i^T \tilde V/\|V\|^2 = \hat N_i^T$ for $i=1,\dots,m$, and $\tilde C = C \tilde V = B^T \tilde W = \tilde B^T$. By Lemma~\ref{lem:rank1_BIRKA} we do not need to consider Steps~\ref{alg_step:BIRKA:eig}-\ref{alg_step:BIRKA:transform_diag}. We can now conclude that Step~\ref{alg_step:BIRKA:solve_V} and Step~\ref{alg_step:BIRKA:solve_W} are equal, and thus at the end of the iteration we still have $\tilde V = \tilde W$.
\qed
\end{proof}

\begin{lemma}\label{lem:sym_ALS}
Consider a symmetric generalized Lyapunov equation and let $v,w\in\RR^{n}$ be two given vectors.
Let $v_\textsc{als},w_\textsc{als}\in\RR^{n}$  be the approximations obtained by applying the ALS algorithm (Algorithm~\ref{alg:ALS}) to \eqref{eq:GenLyap} with initial guesses $v$ and $w$.
If $v=w$, then $v_\textsc{als} = w_\textsc{als}$.
\end{lemma}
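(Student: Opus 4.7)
The plan is to mirror Lemma~\ref{lem:sym_BIRKA}, exploiting that in the symmetric case the two half-updates of the ALS loop reduce to a common single-variable fixed-point iteration. The key structural observation is that both auxiliary matrices have the common form $\hat{A}(z) := A + (z^T A z)I + \sum_{i=1}^m (z^T N_i z) N_i$ evaluated at a normalized input, and the right-hand sides $-\R_k w$ in step~3 and $-\R_k^T v = -\R_k v$ in step~6 coincide whenever the normalized versions of $v$ and $w$ agree (using $\R_k = \R_k^T$, which holds in the setting of Theorem~\ref{thm:2.4_gen}).

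Introducing $F(z) := -\hat{A}(z)^{-1}\R_k z$ and $T(z) := F(z)/\|F(z)\|$ for unit $z$, a direct induction on the iteration count, starting from $v = w = u$, shows that after $k$ passes through the loop body the normalized iterates satisfy $v/\|v\| = T^{2k-1}(u/\|u\|)$ and $w/\|w\| = T^{2k}(u/\|u\|)$; both sequences lie on a single $T$-orbit rooted at $u/\|u\|$. At termination of the while-loop, these normalized iterates stabilize to a common limit $x^*$ that is a fixed point of $T$, whence $v/\|v\| = w/\|w\| = x^*$ and the unnormalized $w = F(x^*) = \|F(x^*)\| x^*$ is a positive scalar multiple of $v = x^*$. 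Step~7 then rescales both to satisfy $\|v\| = \|w\|$ while preserving the outer product $v w^T$, producing $v_\textsc{als} = w_\textsc{als}$.

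The hard part is verifying that the $T$-orbit converges to a genuine fixed point rather than getting stuck in a nontrivial period-two cycle $T(y) = z$, $T(z) = y$ with $y \neq z$ unit vectors. From such a cycle with $F(y) = \alpha z$ and $F(z) = \beta y$, the cross-symmetry of $\R_k$ yields the scalar identity $(\alpha - \beta)K = 0$ where $K := y^T A y + z^T A z + \sum_i (y^T N_i y)(z^T N_i z)$. The positive definiteness of $\mathcal{M}$ on rank-one matrices $uu^T$ (a consequence of Proposition~\ref{prop:aux_pos_def} combined with the assumption $\rho(\Lop^{-1}\Pi)<1$) gives $\sum_i (u^T N_i u)^2 < -2 u^T A u$ for every unit $u$, and a Cauchy--Schwarz estimate then forces $K < 0$, so $\alpha = \beta$. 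The remaining step---showing that the coupled equations $F(y) = \alpha z$, $F(z) = \alpha y$ have no solution with $y \neq z$---requires finer analysis using the symmetric structure of both $\R_k$ and of the operators $\hat A(\cdot)$, and constitutes the main technical obstacle.
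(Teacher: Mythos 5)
Your core structural observation is exactly the one the paper's proof rests on: in the symmetric case $\hat A_1$ and $\hat A_2$ are the same function of their (normalized) argument, $\R_k=\R_k^T$, and hence the two linear solves in the loop body realize one and the same map. The paper's entire proof is a two-line induction from this fact, mirroring Lemma~\ref{lem:sym_BIRKA}: assume $v=w$ at the start of a pass through the loop, conclude $\hat A_1=\hat A_2$, note that Steps~\ref{alg_step:ALS:solve_v} and~\ref{alg_step:ALS:solve_w} are then equal, and deduce $v=w$ at the end of the pass. No orbit analysis, no fixed-point argument, no discussion of cycles appears there.

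Your proposal, by contrast, is incomplete by your own admission, and that is a genuine gap. Having (correctly) observed that Step~\ref{alg_step:ALS:solve_w} uses the freshly updated $v$, so that at any finite termination $v$ and $w$ are \emph{consecutive} iterates $T^{2k-1}(u)$ and $T^{2k}(u)$ of a single map rather than outputs of two parallel identical solves, you then need the orbit to have landed on a genuine fixed point of $T$; your argument rules out cycles with $\alpha\neq\beta$ but leaves open the coupled system $F(y)=\alpha z$, $F(z)=\alpha y$ with $y\neq z$, which you explicitly defer as ``the main technical obstacle.'' Without that step you have only shown that $v_\textsc{als}$ and $w_\textsc{als}$ lie on the same $T$-orbit, not that they coincide. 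To be fair, the subtlety you are wrestling with is real: the paper's induction implicitly treats the two half-updates as acting on the same input (which is literally true for BIRKA's parallel solves but only holds for ALS's sequential solves at exact convergence of the inner iteration). But the expected argument here is the short induction on the loop invariant $v=w$; if you insist on the more careful dynamical reading, you must either close the period-two argument or weaken the conclusion to hold at exact convergence of the while-loop, neither of which your proposal does.
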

\begin{proof}
Similar to the proof of Lemma~\ref{lem:sym_BIRKA} it is enough to show that if $v = w$ at the beginning of a loop then it also holds at the end of the loop. Hence we assume that $v = w$. Then $\hat A_1 = \hat A_2$ follows by direct calculations. %, using that $A = A^T$, $N_i = N_i^T$ for $i=1,\dots,m$. 
Moreover, by assumption $\R_k=\R_k^T$. Thus Step~\ref{alg_step:ALS:solve_v} and Step~\ref{alg_step:ALS:solve_w} are equal, and hence at the end of the iteration we still have that $v=w$.
\qed
\end{proof}

\begin{theorem}\label{thm:H2-iter-alg}
Consider a symmetric generalized Lyapunov equation and let $v\in\RR^{n}$ be a given vector. Let $v_\textsc{birka}\in\RR^{n}$ be the approximation obtained by applying BIRKA (Algorithm~\ref{alg:BIRKA}) to \eqref{eq:GenLyap} with $C= B^T$ and initial guess $v$. Moreover, let $v_\textsc{als}\in\RR^n$  be the approximation obtained by applying the ALS algorithm (Algorithm~\ref{alg:ALS}) to \eqref{eq:GenLyap} with initial guess $v$.
Then $v_\textsc{birka}= v_\textsc{als}$.
\end{theorem}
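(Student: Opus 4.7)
My plan is to prove the equality by induction on the iteration count, using the three preceding lemmas to reduce both algorithms to equivalent single-vector iterations in the symmetric rank-1 setting, and then verifying that their per-iteration update rules coincide.

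The preparation is as follows. Lemma~\ref{lem:rank1_BIRKA} collapses the eigen-decomposition and transformation steps of BIRKA so that Steps~\ref{alg_step:BIRKA:eig}--\ref{alg_step:BIRKA:transform_diag} become trivial and the orthogonalization in Step~\ref{alg_step:BIRKA:orth} is a mere normalization. Lemma~\ref{lem:sym_BIRKA} then guarantees that $\tilde V = \tilde W$ is preserved throughout BIRKA when $C = B^T$ and both sides are initialized with $v$; Lemma~\ref{lem:sym_ALS} gives the analogous invariance $v = w$ for ALS. Hence both algorithms collapse to iterations on a single vector, and it only remains to identify their per-iteration update rules.

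For the inductive step I would denote by $u$ the common vector at the start of an iteration and compute both updates explicitly. For BIRKA with $\tilde V = \tilde W = u$, Step~\ref{alg_step:BIRKA:update} yields the scalar projections $\tilde A = u^T A u / \|u\|^2$, $\tilde N_i = u^T N_i u / \|u\|^2$, and $\tilde B = u^T B / \|u\|^2$; combined with $\tilde \Lambda = \tilde A$ and $\hat N_i = \tilde N_i$ from Lemma~\ref{lem:rank1_BIRKA}, Step~\ref{alg_step:BIRKA:solve_V} reduces to a linear system whose coefficient operator is $A + \tilde A\, I + \sum_{i=1}^m \tilde N_i N_i$ and whose right-hand side is proportional to $BB^T u$. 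In ALS, starting from the implicit $\hat X_0 = 0$ (so that $\R_k = BB^T$) and with $v = w = u$, Steps~\ref{alg_step:ALS:norm_w}--\ref{alg_step:ALS:A1} construct exactly the same coefficient operator, and Step~\ref{alg_step:ALS:solve_v} produces a linear system with a right-hand side parallel to BIRKA's (differing only by a factor of $\|u\|$). The two solutions therefore lie in the same one-dimensional subspace, and the subsequent normalizations (the orth in Step~\ref{alg_step:BIRKA:orth} and the pair Step~\ref{alg_step:ALS:norm_v}/Step~\ref{alg_step:ALS:norm_balance}) return identical unit vectors, closing the induction.

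The main obstacle I anticipate is disentangling the different normalization and ordering conventions of the two algorithms: BIRKA updates $\tilde V$ and $\tilde W$ simultaneously from the old iterates (Jacobi-style), whereas ALS updates $w$ from the already-updated $v$ (Gauss-Seidel-style). The three invariance lemmas neutralize this asymmetry in the symmetric rank-1 setting, so that the comparison reduces to the purely algebraic observation that the per-iteration linear systems share the same coefficient matrix and have right-hand sides differing only by a positive scalar.
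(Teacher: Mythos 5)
Your argument follows essentially the same route as the paper's proof: both invoke Lemmas~\ref{lem:rank1_BIRKA}, \ref{lem:sym_BIRKA} and \ref{lem:sym_ALS} to collapse each algorithm to a single-vector iteration and then check that the per-iteration linear systems share the coefficient matrix $A + (u^TAu/\|u\|^2)I + \sum_{i=1}^m (u^TN_iu/\|u\|^2)N_i$ with parallel right-hand sides, your version merely being more explicit than the paper's ``it can be observed that the computations are equivalent.'' The one piece you omit is that equality of the outputs also requires the two while-loops to terminate after the same number of iterations; the paper closes this by noting that the stopping criteria coincide, since with $v=w$ one has $(v^TAv/\|v\|^2 + v^TA^Tv/\|v\|^2)/2 = v^TAv/\|v\|^2 = \tilde A\in\RR$, so the quantity ALS monitors is exactly BIRKA's scalar eigenvalue. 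Adding that observation would make your proof complete.
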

\begin{proof}
Firstly, Lemma~\ref{lem:sym_BIRKA} and Lemma~\ref{lem:sym_ALS} makes it reasonable to assess the algorithms with only a single initial guess as well as a single output. Moreover, Step~\ref{alg_step:BIRKA:solve_W} in BIRKA as well as Steps~\ref{alg_step:ALS:A1}-\ref{alg_step:ALS:norm_v} in ALS are redundant. Furthermore, we know from Lemma~\ref{lem:rank1_BIRKA} that in this situation Steps~\ref{alg_step:BIRKA:eig}, \ref{alg_step:BIRKA:transform_diag}, and \ref{alg_step:BIRKA:orth} of BIRKA are also redundant. Hence we need to compare the procedure consisting of Steps~\ref{alg_step:BIRKA:update} and \ref{alg_step:BIRKA:solve_V} from BIRKA, with the procedure consisting of Steps~\ref{alg_step:ALS:norm_w}, \ref{alg_step:ALS:A2}, and \ref{alg_step:ALS:solve_w} from ALS. It can be observed that the computations are equivalent and thus the asserted equality holds if they stop after an equal amount of iterations. We hence consider the stopping criteria and note that they are the same, since 
$(v^TA^Tv + v^TAv)/2\|v\|^2 = v^TAv/\|v\|^2 = \tilde A \in \RR$.
\qed
\end{proof}

\begin{corollary}
Theorem~\ref{thm:2.4_gen} is applicable with ALS changed to BIRKA, using subspaces of dimension 1.
\end{corollary}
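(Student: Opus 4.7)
The plan is to reduce this corollary directly to Theorem~\ref{thm:H2-iter-alg}, which already establishes that rank-1 BIRKA and ALS produce the same output when they are fed the same initial guess and the same symmetric data. First I would make precise what it means to replace ALS by rank-1 BIRKA inside the iteration \eqref{eq:ALS_iter}: at iteration $k$, by the inductive hypothesis $\R_k=\R_k^T\succeq 0$, so $\R_k$ admits a factorization $\R_k=B_kB_k^T$. Apply rank-1 BIRKA to the auxiliary symmetric bilinear system with matrices $A,N_1,\dots,N_m,B_k,C_k=B_k^T$, and set $v_{k+1}$ to be the returned vector.

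Next I would invoke Theorem~\ref{thm:H2-iter-alg}: since ALS applied to the residual $\R_k$ and rank-1 BIRKA applied to the same data produce, for any common initial guess $v\in\RR^n$, identical output vectors, the sequence $\{v_{k+1}\}$ generated by the BIRKA-driven iteration coincides with the one generated by the ALS-driven iteration. Consequently the sequences $\{\hat X_k\}$ and $\{\R_k\}$ are identical in the two schemes, and the conclusion $\R_k=\R_k^T\succeq 0$ from Theorem~\ref{thm:2.4_gen} transfers verbatim.

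There is essentially no separate analytic content to provide; the only care needed is the bookkeeping that Lemma~\ref{lem:sym_BIRKA} already guarantees $v_\textsc{birka}=w_\textsc{birka}$ whenever one starts BIRKA with $v=w$ under a symmetric right-hand side, so that $v_{k+1}v_{k+1}^T$ is a legitimate symmetric rank-one update, exactly as required by the iteration in Theorem~\ref{thm:2.4_gen}. Thus the main (and only) obstacle is the purely notational one of specifying how BIRKA is being called inside the outer iteration; once that is fixed, the corollary is immediate.
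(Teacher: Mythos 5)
Your proposal is correct and follows exactly the route the paper intends: the corollary is stated without proof precisely because it is an immediate consequence of Theorem~\ref{thm:H2-iter-alg} (together with Lemma~\ref{lem:sym_BIRKA}), applied at each step to the factored residual $\R_k=B_kB_k^T$ so that the BIRKA-driven and ALS-driven iterations generate identical sequences $\{\hat X_k\}$ and $\{\R_k\}$. The only content is the bookkeeping you describe, and you have handled it correctly.
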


%%%%%%%%%%%%%%%%%%%%%%%%%%%%%%%%%%%%%%%%%%%%%%%%%%%%%%%%%%%%%%%%%%%%%%%%
\section{Fixed-point iteration and approximative $\mathcal{M}$-norm minimization}\label{sect:fixed-point}
In this section we present a similar result as in the previous section, but for the fixed-point iteration. Instead of (locally) minimizing the error in the $\mathcal{M}$-norm with rank-1 updates, the fixed-point iteration minimizes an upper bound for the $\mathcal{M}$-norm, but with no rank-constraint on the minimizer.
As in the previous section, we assume that $A=A^T \prec 0$, and $N_i=N_i^T$ for $i=1,\dots,m$, as well as $\rho(\Lop^{-1}\Pi)<1$.
%Note that the symmetry of $N_i$ implies that $\Pi(X)$ is positive semidefinite when $X$ is positive semidefinite. Thus, as mentioned in the introduction, it is possible to show that the (symmetric) generalized Lyapunov equation~\eqref{eq:GenLyap} has a unique solution $X$, and specifically it is symmetric and positive definite, see, e.g., \cite[Theorem~4.1]{DammDirectADI}.
We remind ourselves that with these assumptions the symmetric generalized Lyapunov equation~\eqref{eq:GenLyap} has a unique solution $X$, and specifically it is symmetric and positive definite, see, e.g., \cite[Theorem~4.1]{DammDirectADI}.

Recall the fixed-point iteration
for the generalized Lyapunov equation~\eqref{eq:GenLyap},
\begin{align}\label{eq:fixed-point-iter}
\Lop(\hat X_{k+1}) = -\Pi(\hat X_{k}) - BB^T, \qquad k=0,1,\dots,
\end{align}
with $\hat X_0 = 0$.
The iteration has been presented as a convergent splitting in, e.g., \cite[Equation~(12)]{DammDirectADI}, \cite[Equation~(12)]{ZHANG2002}, and \cite[Equation~(4)]{Shank2016}.
We want to relate iteration~\eqref{eq:fixed-point-iter} to the $\mathcal{M}$-norm minimization problem. Hence consider the problem
\begin{align*}
\min_{\substack{\Delta\\\Delta=\Delta^T\succeq 0}} \|X - \hat X_{k} - \Delta\|^2_\mathcal{M}.
\end{align*}
A reason to restrict the minimization to symmetric positive semidefinite matrices is that we know that the solution $X=X^T\succeq 0$. Hence an iteration started with $X_0=0$ creates a sequence of symmetric positive semidefinite approximations, ordered as $\hat X_{k+1} \succeq \hat X_{k}$, and converging to the symmetric positive definite solution, cf. Section~\ref{sect:ALS} and \cite[Theorem~2.4]{Kressner:2015:Truncated}.
Naturally Proposition~\ref{prop:residual} gives us the solution in just one step. However, the computation is as difficult as the original problem and hence we consider minimizing an upper bound.
Similar as before we disregard the constant term $\|X-\hat X_k\|^2_\mathcal{M}$ in the minimization and consider
\begin{align*}
\min_{\substack{\Delta\\\Delta=\Delta^T\succeq 0}} \langle \Delta, \Delta\rangle_\mathcal{M} &- 2\trace( \Delta^T \R_{k})\\
&=
\min_{\substack{\Delta\\\Delta=\Delta^T\succeq 0}} \trace( \Delta^T(-\Lop(\Delta)-\Pi(\Delta))) - 2\trace( \Delta^T \R_{k})
\\&=
\min_{\substack{\Delta\\\Delta=\Delta^T\succeq 0}} \trace( \Delta^T(-\Lop(\Delta)-2\R_{k})) -\trace( \Delta^T\Pi(\Delta))
\\&\leq
\min_{\substack{\Delta\\\Delta=\Delta^T\succeq 0}} \trace( \Delta^T(-\Lop(\Delta)-2\R_{k})),
\end{align*}
where the inequality comes from that $\Delta^T$ and $\Pi(\Delta)$ are positive semidefinite matrices, and hence the trace is non-negative \cite{Neudecker:1992:AMatrixTrace}.
We want to say that the last step is minimized by $\Delta=-\Lop^{-1}(\R_{k})$, which is true if the residual is symmetric and positive semidefinite. We also want to show that this iteration is equivalent to the fixed-point iteration. The argument is closed by the following result.

\begin{theorem}\label{thm:fixed-point}
Consider the symmetric generalized Lyapunov equation \eqref{eq:GenLyap} with the additional assumptions that $A\prec 0$ and $\rho(\Lop^{-1}\Pi)<1$. Moreover, consider the sequence of approximations constructed as
  \begin{align}\label{eq:fixed-point-iter-residual}
  \begin{aligned}
  \hat X_0 &= 0\\
  \hat X_{k+1} &= \hat X_{k} - \Lop^{-1}(\R_{k}), \qquad k = 0,1,\dots,
  \end{aligned}
\end{align} 
where $\R_{k}$ is the residual associated with $\hat X_k$.
Then $\hat X_k=\hat X_k^T \succeq 0$ and $\R_k=\R_k^T \succeq 0$, for all $k\ge 0$. Furthermore, the iteration \eqref{eq:fixed-point-iter-residual} is equivalent to \eqref{eq:fixed-point-iter}.
\end{theorem}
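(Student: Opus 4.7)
The plan is to handle the three assertions — equivalence to the standard fixed-point iteration, symmetric positive semidefiniteness of $\hat X_k$, and symmetric positive semidefiniteness of $\R_k$ — in that order, because equivalence will let the positivity statements be read off as a short induction.

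First, I would establish the equivalence by pure algebra. Applying $\Lop$ to the update in \eqref{eq:fixed-point-iter-residual} gives $\Lop(\hat X_{k+1}) = \Lop(\hat X_k) - \R_k$, and substituting the definition $\R_k = \Lop(\hat X_k) + \Pi(\hat X_k) + BB^T$ collapses the right-hand side to $-\Pi(\hat X_k) - BB^T$, which is exactly \eqref{eq:fixed-point-iter}. Since $A$ is stable, $\Lop$ is invertible, so the two recursions produce the same sequence whenever they share an initial iterate, in particular for $\hat X_0 = 0$.

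Next, for the positivity assertions I would proceed by simultaneous induction on $k$. The base case $k=0$ is immediate: $\hat X_0 = 0$ is trivially symmetric positive semidefinite, and $\R_0 = BB^T$ is as well. For the inductive step, assume $\hat X_k = \hat X_k^T \succeq 0$ and $\R_k = \R_k^T \succeq 0$. Because $A = A^T \prec 0$, the operator $-\Lop^{-1}$ maps symmetric positive semidefinite matrices to symmetric positive semidefinite matrices (concretely, $-\Lop^{-1}(\R_k) = \int_0^\infty e^{At}\R_k e^{At}\, dt \succeq 0$). Hence $\hat X_{k+1} = \hat X_k - \Lop^{-1}(\R_k)$ is a sum of two symmetric positive semidefinite matrices, so $\hat X_{k+1} = \hat X_{k+1}^T \succeq 0$.

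For the residual, I would exploit a telescoping identity. Subtracting the fixed-point relation $\Lop(\hat X_{k+1}) = -\Pi(\hat X_k) - BB^T$ from the definition $\R_{k+1} = \Lop(\hat X_{k+1}) + \Pi(\hat X_{k+1}) + BB^T$ yields
\begin{align*}
\R_{k+1} = \Pi(\hat X_{k+1}) - \Pi(\hat X_k) = \Pi(\hat X_{k+1} - \hat X_k) = \Pi\bigl(-\Lop^{-1}(\R_k)\bigr),
\end{align*}
by linearity of $\Pi$ and the definition of the iteration. Since $-\Lop^{-1}(\R_k) \succeq 0$ and $\Pi$ is positivity-preserving by \eqref{eq:Pi}, we conclude $\R_{k+1} \succeq 0$; symmetry is preserved throughout since $A$, $N_i$ and $BB^T$ are all symmetric. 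This closes the induction. The only step that requires any care is the telescoping identity for $\R_{k+1}$, since this is where the equivalence proved in the first step is used to cancel the $BB^T$ and $\Lop$-terms; once this reduction is seen, the whole argument collapses into two standard monotonicity observations for symmetric Lyapunov operators.
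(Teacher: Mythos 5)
Your proposal is correct and follows essentially the same route as the paper: induction with the observation that $\Delta:=-\Lop^{-1}(\R_k)\succeq 0$ for stable symmetric $A$, leading to $\R_{k+1}=\Pi(\Delta)\succeq 0$, with the equivalence to \eqref{eq:fixed-point-iter} obtained by the same elementary rearrangement. The only cosmetic differences are that you prove the equivalence first and invoke it in the telescoping step (the paper instead writes $\R_{k+1}=\R_k+\Lop(\Delta)+\Pi(\Delta)$ directly), and that you make the positivity of $-\Lop^{-1}$ explicit via the integral representation.
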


\begin{proof}
The first part of the proof is by induction and similar to that of Theorem~\ref{thm:2.4_gen}. It trivially holds that $X_0=X_0^T\succeq0$ and $\R_0=\R_0^T\succeq 0$. Now assume that this is the case for some $k$.
Then $\Delta:=-\Lop^{-1}(\R_k)$ is symmetric and positive semidefinite, and hence $\hat X_{k+1}$ is symmetric and positive semidefinite. Moreover, since $\hat X_{k+1}$ is symmetric and because of the structure of the operators in \eqref{eq:GenLyap} we know that $\R_{k+1}$ is symmetric. Thus what is left to show for the first part is $\R_{k+1} \succeq0$.
We have that $\R_{k+1} \succeq0$ if and only if $y^T\R_{k+1}y\geq0$ for all $y\in\RR^{n}$. Hence take an arbitrary $y\in\RR^{n}$ and consider
\begin{align*}
y^T \R_{k+1}y
&=
%y^T\left(\Lop\left(\hat X_k + \Delta\right) + \Pi\left(\hat X_k + \Delta\right)\right)y
%=
y^T \R_k y + y^T\left(\Lop\left(\Delta\right) + \Pi\left(\Delta\right)\right)y
= y^T\left(\Pi\left(\Delta\right)\right)y \geq 0,
\end{align*}
where we in the first equality use the linearity of the operators and in the second the definition of $\Delta$. The last inequality holds since $\Delta$ is symmetric and positive semidefinite and $\Pi$ is a symmetric operator.

For the second part of the proof, what is left to show is that \eqref{eq:fixed-point-iter-residual} and \eqref{eq:fixed-point-iter} are equivalent. This comes directly from adding and subtracting $\Lop(\hat X_k)$ from the right-hand-side of \eqref{eq:fixed-point-iter}, and using the linearity of $\Lop$.
\qed
\end{proof}

\begin{corollary}
The fixed-point iteration \eqref{eq:fixed-point-iter} produces an increasing sequence of approximations $0=\hat X_0 \preceq \hat X_1\preceq\dots\preceq X$.
\end{corollary}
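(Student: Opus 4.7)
The plan is to establish both monotonicity $\hat X_k \preceq \hat X_{k+1}$ and the upper bound $\hat X_k \preceq X$ as two separate arguments, each exploiting the positive semidefiniteness of the residuals $\R_k$ already guaranteed by Theorem~\ref{thm:fixed-point}, together with the standard fact that for stable $A$ the operator $-\Lop^{-1}$ maps positive semidefinite matrices to positive semidefinite matrices (the classical Lyapunov lemma; this is the same principle used implicitly throughout the paper, e.g.\ when asserting $X \succ 0$ in Section~\ref{sect:intro}).

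For the monotonicity, I would work from the equivalent form \eqref{eq:fixed-point-iter-residual} rather than from \eqref{eq:fixed-point-iter}. Subtracting $\hat X_k$ from both sides gives the telescoping increment
\begin{align*}
\hat X_{k+1} - \hat X_k = -\Lop^{-1}(\R_k).
\end{align*}
By Theorem~\ref{thm:fixed-point} we already have $\R_k = \R_k^T \succeq 0$, so the classical Lyapunov result yields $\hat X_{k+1} - \hat X_k \succeq 0$ immediately.

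For the upper bound, I would use induction on $k$. The base case $\hat X_0 = 0 \preceq X$ is immediate since $X \succ 0$. For the inductive step, assume $X^\text{e}_k := X - \hat X_k \succeq 0$. Proposition~\ref{prop:residual} gives $\Lop(X^\text{e}_k) + \Pi(X^\text{e}_k) + \R_k = 0$, which I would rewrite as $\Lop^{-1}(\R_k) = -X^\text{e}_k - \Lop^{-1}(\Pi(X^\text{e}_k))$. Plugging this into \eqref{eq:fixed-point-iter-residual} and simplifying yields
\begin{align*}
X - \hat X_{k+1} = X^\text{e}_k + \Lop^{-1}(\R_k) = -\Lop^{-1}(\Pi(X^\text{e}_k)).
\end{align*}
Since $\Pi$ preserves positive semidefiniteness (as recalled in the introduction) and $X^\text{e}_k \succeq 0$ by the inductive hypothesis, $\Pi(X^\text{e}_k) \succeq 0$, and applying $-\Lop^{-1}$ once more preserves this, closing the induction.

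The argument is essentially routine given the groundwork in Theorem~\ref{thm:fixed-point}; the only subtlety is to resist proving $\hat X_k \preceq X$ directly from $\hat X_{k+1} - \hat X_k = -\Lop^{-1}(\R_k)$ (which only controls consecutive differences, not the limit), and instead invoke the error equation from Proposition~\ref{prop:residual} to bring $\Pi(X^\text{e}_k)$ into play. That is the only step that could be mishandled; everything else is a direct application of sign-preservation properties of $\Lop^{-1}$ and $\Pi$ together with Theorem~\ref{thm:fixed-point}.
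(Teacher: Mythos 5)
Your proof is correct and fills in exactly the argument the paper leaves implicit: monotonicity follows from $\hat X_{k+1}-\hat X_k=-\Lop^{-1}(\R_k)$ together with $\R_k\succeq 0$ from Theorem~\ref{thm:fixed-point} and the classical fact that $-\Lop^{-1}$ preserves positive semidefiniteness for stable $A$, while the bound $\hat X_k\preceq X$ follows by induction from the identity $X-\hat X_{k+1}=-\Lop^{-1}(\Pi(X^{\text{e}}_k))$ obtained via Proposition~\ref{prop:residual}. Your closing remark about why the telescoping differences alone do not yield the upper bound is well taken; the detour through the error equation is indeed the right move.
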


\begin{remark}
One could consider creating a subspace iteration from \eqref{eq:fixed-point-iter-residual}, by computing a few singular vectors of $\Lop^{-1}(\R_k)$ and add to the basis. The method seems to have nice convergence properties per iteration in the symmetric case, but not in the non-symmetric case. However, the (na{\"i}ve) computations are prohibitively expensive. See \cite{Shank2016} for a computationally more efficient way of exploiting the fixed-point iteration.
\end{remark}

%%%%%%%%%%%%%%%%%%%%%%%%%%%%%%%%%%%%%%%%%%%%%%%%%%%%%%%%%%%%%%%%%%%%%%%%
\section{A residual-based rational Krylov generalization}\label{sect:Res_Rat_Kry}
Given Proposition~\ref{prop:residual} and the discussion in Sections~\ref{sect:H2_connection} and~\ref{sect:fixed-point} it seems as constructing an iteration based on the current residual can be a useful technique when solving the generalized Lyapunov equation.
Moreover, in \cite[Section~4]{Kressner:2015:Truncated} it is suggested that, so called, preconditioned residuals can be used in expanding the search space. It is further suggested that, for the Lyapunov equation, one such preconditioner could be a one-step-ADI preconditioner $P^{-1}_\text{ADI} = (A-\sigma I)^{-1}\kron(A-\sigma I)^{-1}$, for a suitable choice of the shift.
This strategy can be, somewhat, motivated by Theorem~\ref{thm:fixed-point}.

In this section we present a method that can be seen as a generalization of the rational Krylov subspace method. We further motivate why it can be natural to see it as a generalization of the rational Krylov subspace method, by considering the standard Lyapunov equation.

\subsection{Suggested search space}\label{sect:gen_space}
For the generalized Lyapunov equation \eqref{eq:GenLyap}, we suggest the following search space:
\begin{align}\label{eq:Rat-Kry-Res}
\Kspace_k := \vspan\{B,(A-\sigma_1 I)^{-1}r_0,(A-\sigma_2 I)^{-1}r_1,\dots,(A-\sigma_k I)^{-1}r_{k-1}\},
\end{align}
where $r_{k-1}$ is the most dominant left singular vector of the Galerkin residual $\R_{k-1}$ of $\Kspace_{k-1}$, and $\{\sigma_i\}_{i=1}^k$ is a sequence shift that needs to be chosen. In analogy with the discussion in \cite{Druskin.Simoncini.11}, we suggest that the shift is chosen iteratively, where the approximation error along the current direction is the largest. More precisely,
\begin{align}\label{eq:sigma_k1}
\sigma_{k} := \argmax_{\sigma\in[\sigma_\text{min},\sigma_\text{max}]}\left(\left\| r_{k-1} - (A - \sigma I)\Vbase_{k-1}(A_{k-1} - \sigma I)^{-1}\Vbase_{k-1}^T r_{k-1}\right\|\right),
\end{align}
where $\Vbase_{k-1}$ is a matrix with orthogonal columns containing basis of $\Kspace_{k-1}$, $A_{k-1} = \Vbase_{k-1}^T A \Vbase_{k-1}$, and $[\sigma_\text{min},\sigma_\text{max}]$ is a search interval. Typically for a stable matrix $A$, then $\sigma_\text{min}$ is the negative real part of the eigenvalue of $A$ with largest real part, and correspondingly $\sigma_\text{max}$ is the negative real part of the eigenvalue of $A$ with smallest real part.
Equations \eqref{eq:Rat-Kry-Res} and \eqref{eq:sigma_k1} can be straightforwardly incorporated in a Galerkin method for the generalized Lyapunov equation, the pseudocode is presented in Algorithm~\ref{alg:Rat-Kry-Res}.

\begin{algorithm} \label{alg:Rat-Kry-Res}
%\small
\caption{Residual-based rational Krylov type solver}%Galerkin based iterative residual interpolating extension}
\SetKwInOut{Input}{input}\SetKwInOut{Output}{output}
\Input{$A,N_1,\dots,N_m\in\RR^{n\times n}$ $B\in\RR^{n\times r}$, $\tol$}
\Output{$\hat X$}
\BlankLine
\nl $\Vbase_0 = \emptyset$, $v_1 = \orth(B)$\\
\For{$k = 1,2,\dots $ until convergence}{
\nl $\Vbase_k = [\Vbase_{k-1}, v_k]$\\
\nl Compute the projected matrices: $A_k = \Vbase_k^T A \Vbase_k$, and $N_{i,k} = \Vbase_k^T N_i\Vbase_k$ for $i=1,2,\dots,m$, and $B_k = \Vbase_k^T B$ \\
\nl Solve the projected problem: $A_kY_k +Y_kA_k^T + \sum_{i=1}^m N_{i,k}Y_jN_{i,k}^T + B_kB_k^T = 0$\\
\nl Construct the (Galerkin) approximation: $\hat X_k = \Vbase_k Y_k \Vbase_k^T$ \label{alg_step:RatKry:approx}\\
\nl Compute the residual: $\R_k = A\hat X_k + \hat X_k A^T + \sum_{i=1}^m N_i\hat X_k N_i^T + BB^T$\label{alg_step:RatKry:residual}\\
\nl\If{$\|\R_k\|<\tol$}{
Break
}
\nl $r_k \gets $ the most dominant left singular vector of $\R_k$ \label{alg_step:RatKry:Direction-select}\\
\nl Select shift $\sigma_{k+1}$ according to \eqref{eq:sigma_k1} \label{alg_step:RatKry:Shift-select}\\
\nl $v_{k+1} = (A - \sigma_{k+1}I)^{-1} r_k$\\
\nl $v_{k+1} \gets$ orthogonalize $v_{k+1}$ with respect to $\Vbase_k$\\
}
\nl \Return $\hat X = \hat X_k$
\end{algorithm}
% {\color{blue}XXX For $r>1,$ $v_1$ is not a vector but changing the notation for the first steps might not be the best way to go...?}\\
% {\color{red}I do not fully understand you here... But if something is unclear it is good we fix it. My thought was that the step $v_1 = \orth(B)$ in Step~\textbf{1} would be interpreted as just constructing an orthogonal basis for the span of $B$. Hence at Step~\textbf{2} for $k=1$ we get a basis $V_1$ that is an orthogonal basis of the span of $B$. Or do we make comparisons where we need it to be a vector?}
\begin{remark}\label{rem:iter_res_svd}
In practice the computation of the left singular vector can typically be done approximatively in an iterative fashion. This would also remove the need of computing the approximative solution $\hat X_k$ in Step~\ref{alg_step:RatKry:approx} and the residual in Step~\ref{alg_step:RatKry:residual} explicitly since the matrix vector product can be implemented as $\R_k v = A \Vbase_k Y_k \Vbase_k^T v + \Vbase_k Y_k \Vbase_k^T A^T v + \sum_{i=1}^m N_i \Vbase_k Y_k \Vbase_k^T N_i^T v + BB^Tv$. Such computations may, however, introduce inexactness and present a difficulty in a convergence analysis.
\end{remark}
\begin{remark}\label{rem:DS_shifts}
The dynamic shift-search in Step~\ref{alg_step:RatKry:Shift-select} can, heuristically, be changed for and analogue to \cite[(2.4) and (2.2)]{Druskin.Simoncini.11}. In our case we suggest
\begin{align}\label{eq:DS_shifts}
\sigma_k := \argmax_{\sigma\in\partial S} \frac{1}{|\mathfrak{r}_{k-1}(z)|},
\end{align}
where $S$ is approximates the mirrored spectrum of $A$ and  $\partial S$ is the boundary of $S$, and
\begin{align*}
\mathfrak{r}_{k-1}(z) := \frac{\prod_{j=1}^{\dim(\Kspace_{k-1})} z-\lambda_j^{(k-1)}}{\prod_{\ell=1}^{k-1}z-\sigma_\ell},
\end{align*}
with $\lambda_j^{(k-1)}$ being the Ritz values of $A_{k-1}$. Typically $S$ is approximated at each step using the convex hull of the Ritz values of $A_{k-1}$. It has been observed efficient in experiments since the maximization of \eqref{eq:DS_shifts} is computationally faster compared to \eqref{eq:sigma_k1}. For a practical comparison of convergence properties, see Section~\ref{sect:num}.
\end{remark}
\begin{remark}\label{rem:tangential}
The steps \ref{alg_step:RatKry:Direction-select}-\ref{alg_step:RatKry:Shift-select} in Algorithm~\ref{alg:Rat-Kry-Res} can be changed for a tangential-direction approach according to \cite{Druskin:2014:AdaptiveTangetial}. One practical way, although a heuristic, is to do the shift search according to either \eqref{eq:sigma_k1} or \eqref{eq:DS_shifts}, and then compute the principal direction(s) according to \cite[Section~3]{Druskin:2014:AdaptiveTangetial}, i.e., through a singular value decomposition of $\R_{k-1} - (A - \sigma_k I)\Vbase_{k-1}(A_{k-1} - \sigma_k I)^{-1}\Vbase_{k-1}^T \R_{k-1}$. It has been observed in experiments that such an approach tend to speed up the convergence, in terms of computation time,  since the computation of the residual is costly.
\end{remark}
\begin{remark}\label{rem:complex_shifts}
It is (sometimes) desirable to allow for complex conjugate shifts $\sigma_k$ and $\conj{\sigma}_k$, although, for reasons due to computations and model interpretation, one wants to keep the basis real. This is achievable also in the proposed setting by observing the following standard relation %\resizebox{\textwidth}{!}{
$
\vspan\left\{(A-\sigma_k I)^{-1}r_{k-1},\,(A-\conj{\sigma}_kI)^{-1}r_{k-1}\right\}
= 
\vspan\left\{\re((A-\sigma_k I)^{-1}r_{k-1}),\,\im((A-\sigma_k I)^{-1}r_{k-1})\right\}$. 
%}
Although it requires two shifts to be used together with the vector $r_{k-1}$.
\end{remark}

%%%%%%%%%%%%%%%%%%%%%%%%%%%%%%%%%%%%%%%%%%%%%%%%%%%%%%%%%%%%%%%%%%%%%%%%
\subsection{Analogies to the linear case}\label{sect:lin}
To give further motivation to the suggested subspace in \eqref{eq:Rat-Kry-Res}, we draw parallels with the (standard) rational Krylov subspace for the \emph{standard Lyapunov equation}. The reasoning in this section can be compared to \cite[Section~2]{Baars:2017:Continuation}.
We consider the equation
\begin{align}\label{eq:Lyap}
\Lop(X) + bb^T = 0,
\end{align}
where $\Lop$ is defined by \eqref{eq:L} and $b\in\RR^{n}$. Note that Definitions~\ref{def:Gal_approx} and \ref{def:Gal_residual} are analogous for the (standard) Lyapunov equation \eqref{eq:Lyap}, but with $\Pi=0$. To prove the main result of this section we need the following lemma.

\begin{lemma}\label{lem:Res_Km}
Let $A\in\RR^{n\times n}$ and $s_a\in\RR$ be any scalar such that $(A-s_{a}I)$ is nonsingular. Moreover, let $\Vbase\in\RR^{n\times k}$, $k\leq n$, be orthogonal, i.e., $\Vbase^T\Vbase = I$, and let $\R\in\RR^{n\times n}$ be such that $\vrange((A-s_a I)^{-1}\R)\subseteq\vspan(\Vbase)$.
Then $\R = (A-s_aI)\Vbase(\Vbase^T A \Vbase - s_aI)^{-1}\Vbase^T\R$.
\end{lemma}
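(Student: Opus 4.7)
The plan is to extract a coordinate representation of $(A-s_aI)^{-1}\R$ in the basis $\Vbase$ from the range hypothesis, and then identify the coordinates by projecting against $\Vbase$.

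First I would use the assumption $\vrange\bigl((A-s_aI)^{-1}\R\bigr) \subseteq \vspan(\Vbase)$ to write every column of $(A-s_aI)^{-1}\R$ as a linear combination of the columns of $\Vbase$. That produces a matrix $Z \in \RR^{k\times n}$ with
\[
(A-s_aI)^{-1}\R = \Vbase Z, \qquad \text{equivalently} \qquad \R = (A-s_aI)\Vbase Z.
\]
Multiplying this identity on the left by $\Vbase^T$ and using orthogonality $\Vbase^T\Vbase = I$ gives
\[
\Vbase^T \R = \Vbase^T(A-s_aI)\Vbase\, Z = (\Vbase^T A \Vbase - s_aI)\, Z.
\]

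Next, assuming $\Vbase^T A \Vbase - s_aI$ is invertible (which is an implicit requirement for the right-hand side of the claim to even be defined; in the intended application of the lemma the shifts $\sigma_k$ are taken in the mirrored spectral interval of $A$ and hence stay away from the Ritz values $\sigma(\Vbase^T A \Vbase)$, cf.\ Section~\ref{sect:gen_space}), I can solve for $Z$, obtaining
\[
Z = (\Vbase^T A \Vbase - s_aI)^{-1}\Vbase^T \R.
\]
Substituting back into $\R = (A-s_aI)\Vbase Z$ yields the stated identity.

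The argument is essentially routine linear algebra, so there is no substantial obstacle. The only delicate point is justifying that $\Vbase^T A \Vbase - s_aI$ is nonsingular: it is not implied by the bare hypothesis that $A - s_aI$ is nonsingular, but is a mild and standard condition in rational Krylov constructions, and will be presumed available from the context in which the lemma is invoked.
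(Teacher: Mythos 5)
Your proof is correct and rests on exactly the same two ingredients as the paper's: the range hypothesis, which lets you write $(A-s_aI)^{-1}\R = \Vbase Z$ (the paper phrases this as $(A-s_aI)^{-1}\R = \Vbase\Vbase^T(A-s_aI)^{-1}\R$), and the identity $\Vbase^T(A-s_aI)\Vbase = \Vbase^T A\Vbase - s_aI$; the only difference is direction, since the paper starts from the right-hand side and simplifies it to $\R$ while you solve forward for the coordinate matrix $Z$. You are also right that the invertibility of $\Vbase^T A\Vbase - s_aI$ is tacitly assumed (it already appears in the statement of the lemma), and the paper does not justify it either.
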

\begin{proof}
To prove the statement we consider the right-hand-side of the asserted equality,
{%\small
\begin{align*}
  (A - s_{a}I)\Vbase(\Vbase^T A \Vbase &- s_{a}I)^{-1}\Vbase^T \R\\
  &=
(A - s_{a}I)\Vbase(\Vbase^T A \Vbase - s_{a}I)^{-1}\Vbase^T (A-s_{a}I)(A-s_{a}I)^{-1}\R\\
&= (A - s_{a}I)\Vbase(\Vbase^T A \Vbase - s_{a}I)^{-1}\Vbase^T (A-s_{a}I)\Vbase\Vbase^T(A-s_{a}I)^{-1}\R,
\end{align*}}%
where the first equality is just an insertion of an identity matrix, and the second equality follows from the assumption that $\vrange((A-s_{a}I)^{-1}\R)\subseteq\vspan(\Vbase)$. By observing that $(\Vbase^T A \Vbase - s_{a}I)^{-1}\Vbase^T (A-s_{a}I)\Vbase = I$ the expression can be further simplified as
{%\small
\begin{align*}
 (A - s_{a}I)\Vbase(\Vbase^T A \Vbase - s_{a}I)^{-1}\Vbase^T \R
&= (A - s_{a}I)\Vbase\Vbase^T(A-s_{a}I)^{-1}\R\\
&= (A - s_{a}I)(A-s_{a}I)^{-1}\R = \R,
\end{align*}}%
where the second equality, again, follows from $\vrange((A-s_{a}I)^{-1}\R)\subseteq\vspan(\Vbase)$.
\qed
\end{proof}

\begin{theorem}\label{thm:linear_residual_rat_Krylov}
Let $A\in\RR^{n\times n}$, $b\in\RR^n$, and let $\{s_i\}_{i=1}^{k+1}$ be a sequence of shifts such that $A-s_i I$ is nonsingular, and define $\Kspace_k := \vspan\{b,(A-s_1 I)^{-1}b,\dots,(A-s_kI)^{-1}b\}$, and $\Kspace_{k+1}$ analogously. Let $\Vbase_k$ be a basis of $\Kspace_k$, $\Vbase_{k+1}$ a basis of $\Kspace_{k+1}$, and let $v_{k+1}\in\RR^n$ be such that $\Vbase_{k+1} = \begin{pmatrix}
\Vbase_k & v_{k+1} \end{pmatrix}$. Moreover, let $\R_k\in\RR^{n\times n}$ be the Galerkin residual with respect to \eqref{eq:Lyap}.
Then each column of $(A-s_{k+1}I)^{-1} \R_k$ is in $\vspan(V_{k+1})$, i.e., $\vrange((A-s_{k+1}I)^{-1} \R_k)\subseteq\vspan(V_{k+1})$. Furthermore, if $\vrange((A-s_{k+1}I)^{-1} \R_k)\subseteq\vspan(V_{k})$, then $\R_k = 0$.
\end{theorem}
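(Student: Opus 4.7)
The strategy is to treat the two claims separately, exploiting the explicit structure $\R_k = A V_k Y_k V_k^T + V_k Y_k V_k^T A^T + bb^T$ of the Galerkin residual together with Lemma~\ref{lem:Res_Km}.

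For the inclusion $\vrange((A-s_{k+1}I)^{-1}\R_k) \subseteq \vspan(V_{k+1})$, I would examine the three summands of $(A-s_{k+1}I)^{-1}\R_k$ separately. The rightmost term $(A-s_{k+1}I)^{-1}bb^T$ has range spanned by $(A-s_{k+1}I)^{-1}b$, which lies in $\Kspace_{k+1}$ by definition. For $(A-s_{k+1}I)^{-1}V_k$, each column of $V_k$ is a linear combination of $b$ and $(A-s_iI)^{-1}b$, $i=1,\dots,k$; applying $(A-s_{k+1}I)^{-1}$ to $(A-s_iI)^{-1}b$ reduces, via the commuting partial-fraction identity
\begin{equation*}
(A-s_{k+1}I)^{-1}(A-s_iI)^{-1} = \frac{1}{s_{k+1}-s_i}\bigl[(A-s_iI)^{-1} - (A-s_{k+1}I)^{-1}\bigr],
\end{equation*}
to a combination of generators of $\Kspace_{k+1}$ (assuming distinct shifts; if $s_{k+1}=s_i$ for some $i\le k$, then $\Kspace_{k+1}=\Kspace_k$ and the statement becomes trivial). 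The leftmost term is then handled by splitting $(A-s_{k+1}I)^{-1}A = I + s_{k+1}(A-s_{k+1}I)^{-1}$, reducing it to combinations of columns of $V_k$ and $(A-s_{k+1}I)^{-1}V_k$, both already covered.

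For the converse implication I would invoke Lemma~\ref{lem:Res_Km} with $s_a=s_{k+1}$ and $\Vbase=V_k$, which yields
\begin{equation*}
\R_k = (A-s_{k+1}I)V_k(V_k^T A V_k - s_{k+1}I)^{-1}V_k^T \R_k.
\end{equation*}
Right-multiplying by $V_k$ and invoking the Galerkin orthogonality $V_k^T\R_k V_k = 0$ immediately gives $\R_k V_k = 0$. Since the projected Lyapunov equation on $\vspan(V_k)$ has a symmetric right-hand side and a unique solution, the Galerkin factor $Y_k$ is symmetric, which makes $\R_k$ symmetric. Consequently $V_k^T \R_k = (\R_k V_k)^T = 0$, and substituting back into the displayed identity gives $\R_k = 0$.

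The main subtlety is the treatment of coinciding shifts in the partial-fraction step of the first part; the remedy is the observation that $s_{k+1}=s_i$ for some $i\le k$ forces $\Kspace_{k+1}=\Kspace_k$, rendering the inclusion vacuous. A secondary point worth noting explicitly is the use of the symmetry $\R_k=\R_k^T$, which is immediate from the structure of the projected problem but is not asserted in the theorem statement.
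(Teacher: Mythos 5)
Your proof is correct, and for the first claim it takes a genuinely different route from the paper. The paper does not decompose $(A-s_{k+1}I)^{-1}\R_k$ term by term; it instead invokes Proposition~4.2 of Druskin and Simoncini, which factors the rational-Krylov Galerkin residual as $\R_k=UJU^T$ with $U=(u_1,\,u_2)$, where $u_1\in\Kspace_k$ and $u_2=s_{k+1}v_{k+1}-(I-V_kV_k^T)Av_{k+1}$, and then simply checks that $(A-s_{k+1}I)^{-1}u_1\in\Kspace_{k+1}$ and $(A-s_{k+1}I)^{-1}u_2=-v_{k+1}+(A-s_{k+1}I)^{-1}V_k\beta\in\Kspace_{k+1}$. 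That argument is shorter once the cited factorization is granted, exposes the rank-2 structure of the residual, and sidesteps any case distinction on the shifts. Your elementary computation on $\R_k=AV_kY_kV_k^T+V_kY_kV_k^TA^T+bb^T$, using the resolvent identity together with $(A-s_{k+1}I)^{-1}A=I+s_{k+1}(A-s_{k+1}I)^{-1}$, is self-contained and equally valid; the price is the separate treatment of coinciding shifts, which you correctly flag (note that if $s_{k+1}=s_i$ then no $v_{k+1}$ extending $V_k$ to a basis of $\Kspace_{k+1}$ exists, so the hypotheses are void rather than the conclusion trivial), plus a harmless sign slip in the partial-fraction coefficient, which for the bracket as you wrote it should read $1/(s_i-s_{k+1})$. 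For the second claim your argument is essentially the paper's: both apply Lemma~\ref{lem:Res_Km} and annihilate the result through the Galerkin orthogonality $V_k^T\R_kV_k=0$ via the symmetry of $\R_k$ (the paper inserts the transposed identity to sandwich $V_k^T\R_kV_k$ in the middle, you right-multiply by $V_k$ first --- same substance). Your justification of $\R_k=\R_k^T$ from the uniqueness of the projected solution $Y_k$ supplies a detail the paper leaves implicit.
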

\begin{proof}
We start by proving the first claim. From \cite[Proposition~4.2]{Druskin.Simoncini.11} we have that $\R_k = UJU^T$ with
\begin{align*}
J = \begin{pmatrix}
0 & 1\\ 1 & 0
\end{pmatrix}
\qquad\text{ and }\qquad 
U = \begin{pmatrix}
u_1 & u_2
\end{pmatrix},
\end{align*}
with $u_1 := V_kY_kH_k^{-T}e_{k}h_{k+1,k}$ and $u_2 := v_{k+1}s_{k+1}-(I-V_kV_k^T)Av_{k+1}$.
From the definition of $u_1$ one can see that $u_1 = V_k\alpha$ for some $\alpha\in\RR^n$, hence $u_1\in\Kspace_m$, and thus $(A-s_{m+1}I)^{-1}u_1\in\Kspace_{m+1}$. 
Similarly, from the definition of $u_2$ one can see that $u_2 = (s_{k+1}I-A)v_{k+1} + V_kV_k^T A v_{k+1} = (s_{k+1}I-A)v_{k+1} + V_k\beta$, for some $\beta\in\RR^n$. Thus $(A-s_{k+1}I)^{-1}u_2 = -v_{k+1} + (A-s_{k+1}I)^{-1}V_k\beta\in\Kspace_{k+1}$.
Hence $\vrange((A-s_{k+1}I)^{-1}\R_k)\subseteq\Kspace_{k+1}$, which proves the first claim.

%To prove the second claim we assume that $\vrange((A-s_{m+1}I)^{-1}\R_m)\subseteq\Kspace_{k}=\vspan(\Vbase_k)$. Under this assumption, we can prove the following lemma (see Appendix~\ref{app:proof_Res_Km}).
%\begin{lemma}\label{lem:Res_Km}
%Assume that $\vrange((A-s_{k+1}I)^{-1}\R_k)\subseteq\Kspace_{k}$. Then
%\begin{align*}
%\R_k = (A - s_{k+1}I)\Vbase_m(\Vbase_k^TA\Vbase_k - s_{k+1}I)^{-1}\Vbase_k^T\R_k.
%\end{align*}
%\end{lemma}
To prove the second claim we assume that $\vrange((A-s_{m+1}I)^{-1}\R_m)\subseteq\Kspace_{k}=\vspan(\Vbase_k)$. Under this assumption we can use Lemma~\ref{lem:Res_Km} and the fact that $\R_k = \R_k^T$ to get
\begin{align*}
\R_k
&= (A - s_{k+1}I)\Vbase_m(\Vbase_k^TA\Vbase_k - s_{k+1}I)^{-1}\Vbase_k^T\R_k\\
&= (A - s_{k+1}I)\Vbase_m(\Vbase_k^TA\Vbase_k - s_{k+1}I)^{-1}\Vbase_k^T\R_k\Vbase_k(\Vbase_k^TA^T\Vbase_k-s_{k+1}I)^{-1}\Vbase_k^T(A^T - s_{k+1}I)\\
&= 0,
\end{align*}
since $\R_k$ is the Galerkin residual and thus $\Vbase_k^T\R_k\Vbase_k=0$.
\qed
\end{proof}

The main observation and conclusion from Theorem~\ref{thm:linear_residual_rat_Krylov} can be phrased as follows: Consider the two spaces $\Kspace_k := \vspan\{b,(A-s_1 I)^{-1}b,\dots,(A-s_kI)^{-1}b\}$ and $\hat\Kspace_k := \vspan\{\R_{-1},(A-s_1 I)^{-1}\R_0,\dots,(A-s_kI)^{-1}\R_{k-1}\}$, where $\R_{-1} = b$ and $\R_i$ is the Galerkin residual in space $\Kspace_{i}$, $i=0,1,\dots,k-1$ .
Then for for all relevant cases, i.e., $\R_i\neq 0$ for $i=-1,0,\dots,k-1$, we have that $\Kspace_k = \hat\Kspace_k$. Hence the suggested subspace in \eqref{eq:Rat-Kry-Res} can be seen as a natural generalization of a Rational Krylov subspace for linear matrix equations.

\begin{remark}
The result in Theorem~\ref{thm:linear_residual_rat_Krylov} generalizes naturally to the case $B\in\RR^{n\times r}$, where the arguments have to be done with block matrices. As noted in \cite[just before Section~4.1]{Druskin.Simoncini.11} the changes when going to blocks are mostly technical. Results needed to generalize \cite[Proposition~4.2]{Druskin.Simoncini.11} are found in, e.g., \cite{Abidi:2016:Adaptive}, and the block case is implemented in the code available at Simoncini's webpage\footnote{\url{http://www.dm.unibo.it/~simoncin/software.html}}.
\end{remark}

%%%%%%%%%%%%%%%%%%%%%%%%%%%%%%%%%%%%%%%%%%%%%%%%%%%%%%%%%%%%%%%%%%%%%%%%
\section{Numerical examples}\label{sect:num}
In this section we numerically compare different methods discussed in the paper. All algorithms are treated in a subspace fashion and we compare practically achieved approximation properties as a function of subspace dimension.
For small and moderate sized problems there are algorithms for computing the full solution, although costly, see \cite[Algorithm~2]{jarlebring2017krylov}, cf. \cite[equation~(12)]{ZHANG2002}. Nevertheless this allows for inspection of the actual relative error, i.e., 
\begin{align*}
\|X-\hat X_k\|/\|X\|,
\end{align*} where $\hat X_k$ is the approximation and $X$ is the exact solution. Moreover, it also allows comparison with the (in the Frobenius norm) optimal low-rank approximation based on the SVD.

We summarize some of the implementation details. Specifically, BIRKA is implemented as described in Algorithm~\ref{alg:BIRKA}, with a maximum allowed number of iterations equal to 100. Convergence tolerance is implemented as relative norm difference of the vector of sorted eigenvalues and was set to $10^{-3}$. Each subspace is computed separately since the algorithm does not have a natural way to extend the subspace dimension based on a smaller subspace.
The complete method based on ALS is a subspace method along the lines of \cite[Algorithm~3]{Kressner:2015:Truncated}, rather than an iteratively updated method as described in Theorem~\ref{thm:2.4_gen}. Moreover, because of the structure of the generalized Lyapunov equation, the solution is symmetric even if the coefficient matrices are not. Hence we use a symmetric version of ALS even for the non-symmetric examples. The convergence tolerance is implemented analogously to BIRKA, but here it is only a scalar each time. The maximum allowed number of iterations in ALS was set to 20, and the tolerance was set to $10^{-2}$.
Regarding rational-Krylov-type methods there are a plethora of variants to choose from. We compare the following methods, which we give short labels for the legends further down:
\begin{itemize}
\item A: $\mathcal{K}_k$ as in \eqref{eq:Rat-Kry-Res}, according to Algorithm~\ref{alg:Rat-Kry-Res}
\item B: Algorithm~\ref{alg:Rat-Kry-Res} but with tangential directions according to Remark~\ref{rem:tangential}, though with shifts according to \eqref{eq:sigma_k1}
\item C: Algorithm~\ref{alg:Rat-Kry-Res} but with shifts according to \eqref{eq:DS_shifts}
\item D: Algorithm~\ref{alg:Rat-Kry-Res} but with tangential directions according to Remark~\ref{rem:tangential} and shifts according to \eqref{eq:DS_shifts}
\item E: Standard Rational Krylov. More precisely, $\mathcal{K}_k$ similar to \eqref{eq:Rat-Kry-Res} but instead of using the singular vector of the residual, $r_{k-1}$, we use the right-hand-side $B$ in both \eqref{eq:Rat-Kry-Res} and \eqref{eq:sigma_k1}
\item F: A: $\mathcal{K}_k$ as in \eqref{eq:Rat-Kry-Res}, but with on-beforehand-prescribed shifts given as the recycling of mirrored eigenvalues from a size-10-BIRKA (convergence tolerance set to $10^{-3}$). Mirrored eigenvalues are potentially complex, with positive real part and taken in ascending order according to the real parts.
\end{itemize}
For C, D, and F the shifts may be complex-valued, and the complex arithmetic is avoided by creating the space in accordance with Remark~\ref{rem:complex_shifts}.
For the shift-computing versions of the rational-Krylov-type algorithms, the maximum and minimum eigenvalues  were computed and the mirrored shift-search-boundaries were perturbed with the factors $1.01$ and $0.99$ for the upper and lower boundaries respectively, as to slightly enlarge the region.
Orthogonalization of the basis is implemented very similar to how the SVD-based orthogonalization is done in \matlab. More precisely, it is using \matlab\ built-in QR factorization and keep vectors only if the corresponding diagonal element in R is large enough.
Implementation for the methods A-F is available online.\footnote{\url{https://people.kth.se/~eringh/software/res_rat_Kry_type/}} 
The simulations were done in \matlab\ R2018a (9.4.0.813654) on a computer with four 1.6 GHz processors and 16 GB of RAM.

We test the algorithms on three different problems. In all problems we are computing an approximation to an associated controllability Gramian to a bilinear control system, as in \eqref{eq:bil_gramians}. The examples all have stable Lyapunov operators. The first example is symmetric, the second is non-symmetric but symmetrizable, and the third example non-symmetric. 

%%%%%%%%%%%%%%%%%%%%%%%%%%%
\subsection{Heat equation}

The first example is motivated by an optimal control problem for the selective cooling of steel profiles, see \cite{EppT01}. In this example, the state variable $y$ models the evolution of a temperature and is described by a two-dimensional heat equation,
\begin{align*}
\frac{\partial}{\partial t} w(x,y,t) &= \Delta w(x,y,t)  \qquad\quad &&(x,y,t)\in(0,1)\times(0,1)\times(0,T)\\
w(x,y,0)&=w_0(x,y) && x\in(0,1)\\
-\frac{\partial}{\partial x} w(0,y,t)&= 0.5 (w(0,y,t) - 1) u(t) && (y,t) \in (0,1)\times(0,T)\\
w(1,y,t)&=0 && (y,t) \in (0,1)\times(0,T)\\
w(x,0,t)&=0 && (x,t) \in (0,1)\times(0,T)\\
w(x,1,t)&=0 && (x,t) \in (0,1)\times(0,T),
\end{align*}
where a control $u(t)$ enters bilinearly  from the left boundary through a Robin boundary condition, and the other spatial boundaries satisfy homogeneous Dirichlet conditions. The control can be interpreted as the spraying intensity of a cooling fluid. The system is discretized in space using centred finite difference, which yields a bilinear system with
$A\in\RR^{5041\times 5041}$, $B\in\RR^{5041}$, $m=1$, and $N_1=N\in\RR^{5041\times 5041}$. 
%We consider the controllability Gramian which we try to compute by solving the generalized Lyapunov equation. 
It can be further noted that, $A=A^T\prec0$ and $N=N^T$, and hence the theory of $\H2$-optimality and the definition of the $\mathcal{M}$-norm, \eqref{eq:M-norm}, from above is applicable.

\begin{figure}
  \centering
  \includegraphics{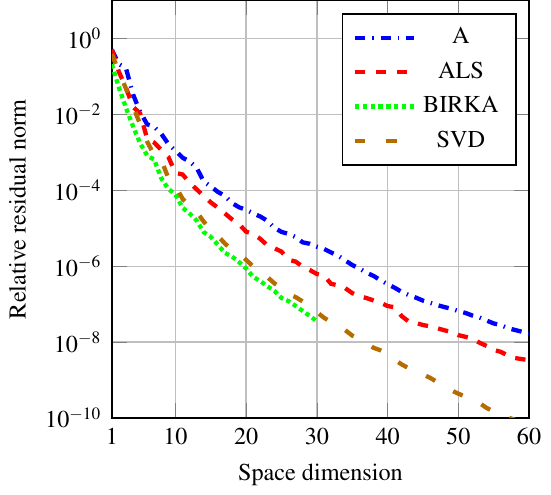}
  \hfill
  \centering
  \includegraphics{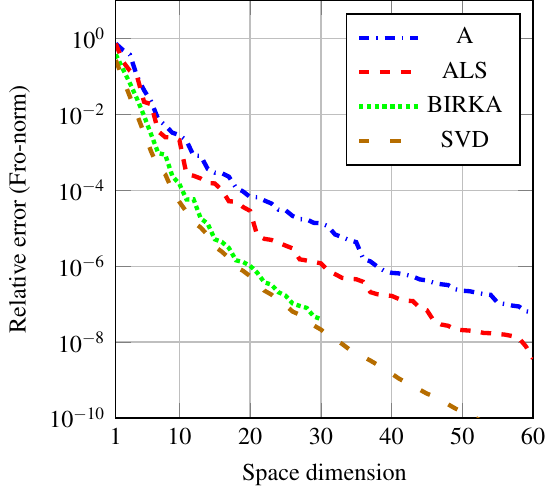}

   \caption{Cross-algorithm comparison for the heat equation. Relative residual norm (left), relative error in Frobenius norm (right).}\label{fig:heat_5041_comp}
%\end{figure}
$\text{ }$\\
%\begin{figure}
  \centering
  \includegraphics{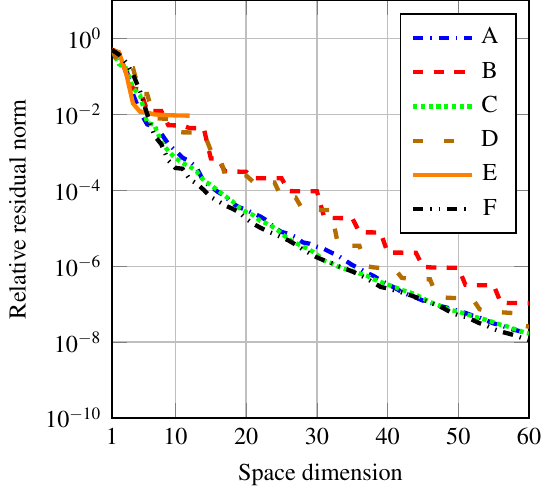}
  \hfill
  \centering
  \includegraphics{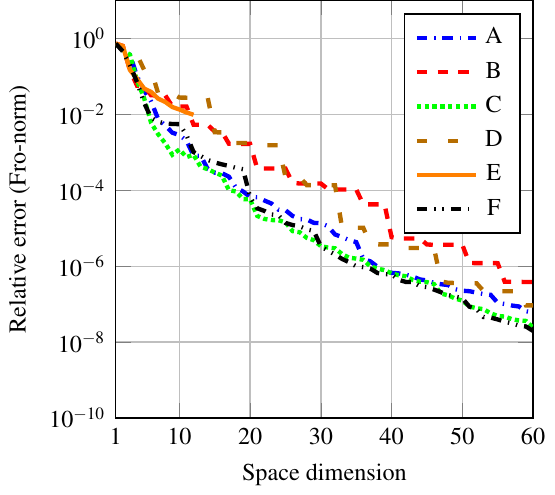}
   \caption{Rational-Krylov-type method comparison for the heat equation. Relative residual norm (left), relative error in Frobenius norm (right). Compare with Figure~\ref{fig:heat_5041_comp} as the lines for method A are the same in both figures. For a description of the labels, see the beginning of this section.}\label{fig:heat_5041_kry}
%\end{figure}
$\text{ }$\\
%\begin{figure}
  \centering
  \includegraphics{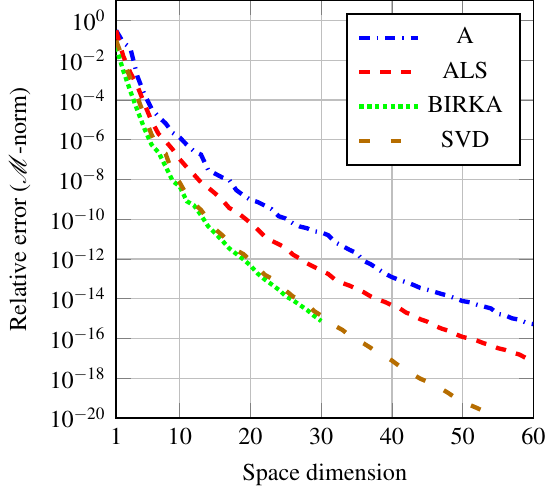}
  \hfill
  \centering
  \includegraphics{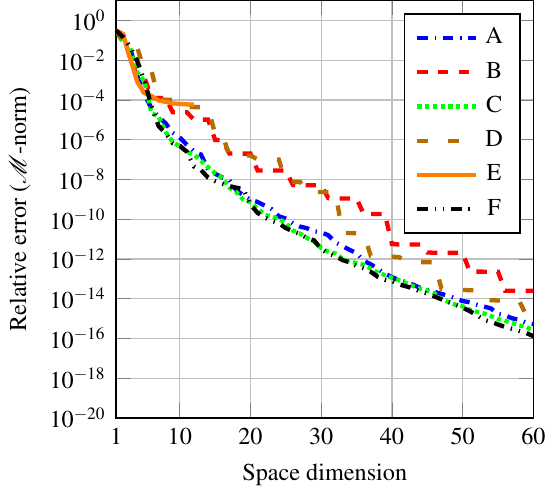}
   \caption{The cross-algorithm comparison (left) and rational-Krylov-type methods (right) for the heat equation. However, the relative error is measured in the $\mathcal{M}$-norm. The lines for method A are the same in both plots.}\label{fig:heat_5041_enorm}
\end{figure}

We compare different methods discussed in the paper, and we compare both the relative residual norm, as well as the relative error. For readability the plots have been split, hence in Figure~\ref{fig:heat_5041_comp} we compare across different methods, and in Figure~\ref{fig:heat_5041_kry} we compare between different flavours the rational-Krylov-type methods.
It can be observed, see Figure~\ref{fig:heat_5041_comp}, that for this problem BIRKA has extremely good performance, even outperforming the SVD in relative residual norm. Nevertheless, the larger BIRKA subspaces are rather costly to compute. In comparison ALS shows good performance compared to the rational-Krylov-type subspace, and is rather cheap to compute.
When comparing the different Rational-Krylov-type methods, see Figure~\ref{fig:heat_5041_kry}, we see that standard rational-Krylov (E) has the problem that it reaches an invariant subspace and is unable to extend larger than dimension 13. However, stagnation has already been observed. The methods A, C, and F are similar, although B and D are only slightly worse in the error per subspace dimension comparison and practically sometimes even faster to compute.

Since the $\mathcal{M}$-norm is defined for this problem we compare the relative error also in this norm, see Figure~\ref{fig:heat_5041_enorm}. The trend is similar as in the Frobenius norm, although it can be observed that in general the error is smaller and BIRKA has best performance, even compared to the SVD (consider that this is relative error, however, in another norm).

%%%%%%%%%%%%%%%%%%%%%%%%%%%
\subsection{1D Fokker-Planck}
The second example is from quantum physics, where a one-dimensional Fokker-Planck equation describes the evolution of a probability density function, $\rho$, of a particle affected by a potential. Parts of the potential can be manipulated by a so-called optical tweezer, which constitutes our control. For more details see \cite{Hartmann:2013:Balanced}.
More precisely, we consider
\begin{align*}
\frac{\partial}{\partial t} \rho(x,t)  &= \nu\frac{\partial^2}{\partial x^2} \rho(x,t)
+ \frac{\partial}{\partial x}\left(\rho(x,t)\frac{\partial}{\partial x}V(x,t)\right) 
\quad &&(x,t)\in(-6,6)\times(0,T)\\
\rho(x,0)&=\rho_0(x) && x\in(-6,6)\\
\nu \frac{\partial}{\partial x}\rho(x,t)&=-\rho(x,t)\frac{\partial}{\partial x}V(x,t) &&(x, t) \in \{-6,6\} \times (0,T),
\end{align*}
where the potential is $V(x,t) = W(x) + \alpha(x) u(t)$, with the ground (fixed) potential being $W(x) = \left(\left((0.5x^2-15)x^2 + 199\right)x^2 + 28x + 50\right)/200$, and $\alpha(x)$ is an approximately linear control shape function, see \cite{Breiten:2017:Numerical} for further details. In a weighted inner product, the dynamics can be described by self-adjoint operators. However, here we employ an upwinding type finite difference scheme with $5000$ grid point, leading to a non-symmetric system. As has been pointed out in \cite{Hartmann:2013:Balanced}, the system matrix $A$ is not asymptotically stable due to a simple zero eigenvalue associated with the stationary probability distribution. Using a projection-based decoupling, it is however possible to work with an asymptotically stable system of dimension $n=4999$.  Similar to the first example, our control variable is a scalar and, consequently, we only obtain a single bilinear coupling matrix $N_1=N$.  Since the system is non-symmetric, the operator $\mathcal{M}$ is generally indefinite and hence we make no comparisons in the $\mathcal{M}$-norm.

\begin{figure}
  \centering
  \includegraphics{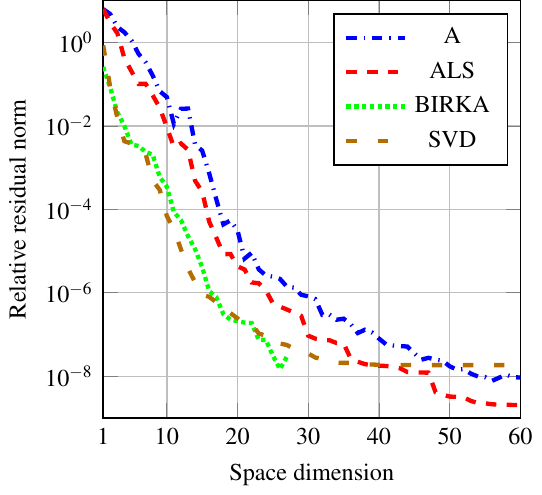}
  \hfill
  \centering
  \includegraphics{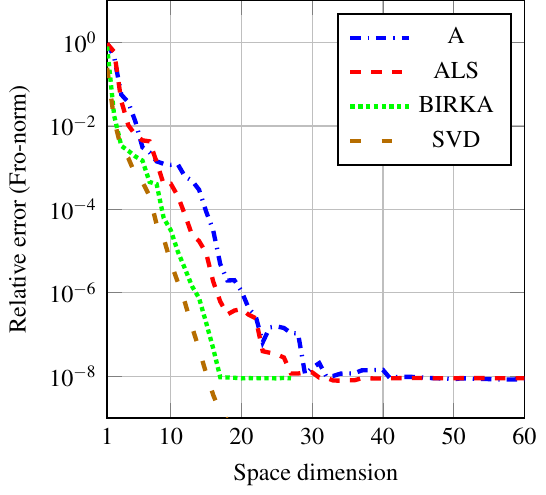}
   \caption{Cross-algorithm comparison for 1D Fokker-Planck equation. Relative residual norm (left) and relative error (right).}\label{fig:fp_4999_comp}
%\end{figure}
$\text{ }$\\
%\begin{figure}
  \centering
  \includegraphics{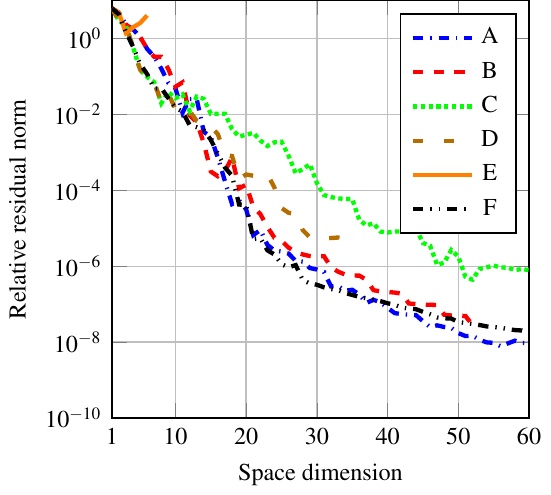}
  \hfill
  \centering
  \includegraphics{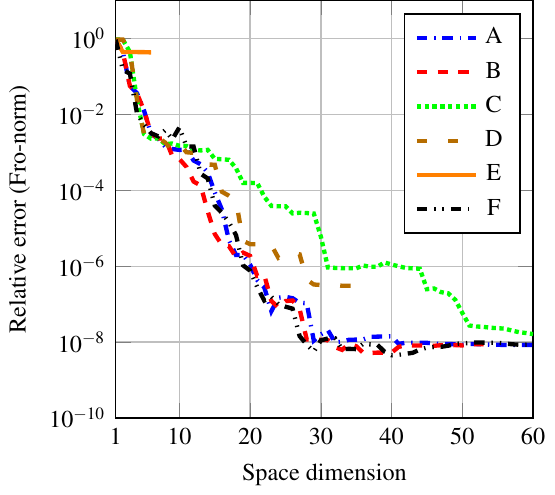}
   \caption{Rational-Krylov-type method comparison for 1D Fokker-Planck equation. Compare with Figure~\ref{fig:fp_4999_comp} as the lines for method A are the same in both figures. For a description of the labels, see the beggnining of this section.}\label{fig:fp_4999_kry}
\end{figure}

The plots in Figures~\ref{fig:fp_4999_comp} and \ref{fig:fp_4999_kry} are analogous to the plots in Figures~\ref{fig:heat_5041_comp} and \ref{fig:heat_5041_kry} respectively. However, for this example the direct solver stagnated at a relative residual of about $10^{-8}$, which can be seen in the stagnation of the SVD approximation in the left of Figure~\ref{fig:fp_4999_comp}. As a result, the comparisons of relative error performance, the right of Figures~\ref{fig:fp_4999_comp} and \ref{fig:fp_4999_kry}, has an artificial stagnation around $10^{-8}$. At this level the convergence stagnates since it measures the discrepancy between the method-approximations and the inexact reference solution, rather than the true error of the method-approximations. Nevertheless we believe the comparisons to be fair more or less upto to point of stagnation, which is further justified by the relative residual plots showing similar behaviour. However, the relative residual also indicating stagnation around $10^{-8}$ for the other methods as well, although not quite as clear as for the SVD.

From Figure~\ref{fig:fp_4999_comp} we see the BIRKA performs well for this problem. However, the subspaces of dimension 28 and 29 did not converge in a 100 iterations and hence these, as well as iteration 30, are left out of the plots. This illustrates a drawback with the method. The performance difference between ALS and the rational-Krylov-type method is slightly smaller compared to the previous example. Among the rational-Krylov-type methods A, B, and F seems to have similar performance, whereas C is clearly worse. Method E has the same problem as in the previous example, and method D also ends up with an insufficient subspace.

%%%%%%%%%%%%%%%%%%%%%%%%%%%
\subsection{Burgers equation}
\begin{figure}
  \centering
  \includegraphics{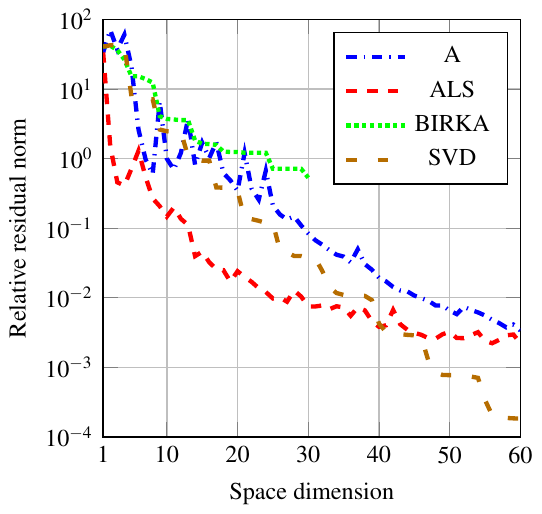}
  \hfill
  \centering
  \includegraphics{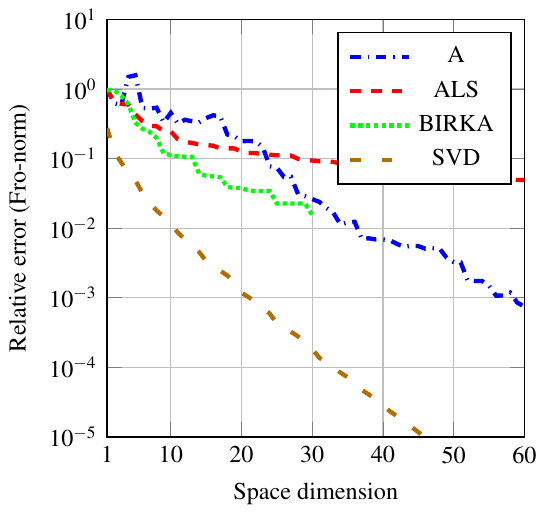}
   \caption{Cross-algorithm comparison for Burgers equation. Relative residual norm (left) and relative error (right).}\label{fig:bu_5112_comp}
%\end{figure}
$\text{ }$\\
%\begin{figure}
  \centering
  \includegraphics{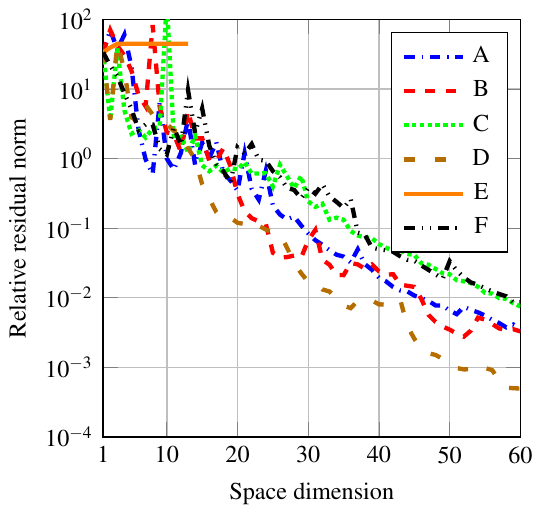}
  \hfill
  \centering
  \includegraphics{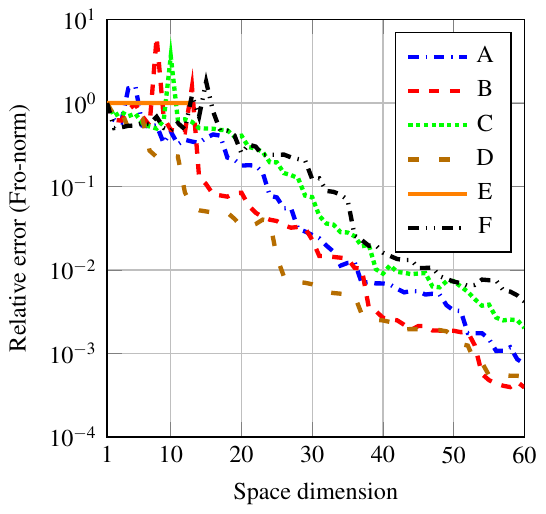}
   \caption{Rational-Krylov-type method comparison for Burgers equation. Compare with Figure~\ref{fig:bu_5112_comp} as the lines for method A are the same in both figures. For a description of the labels, see the beggnining of this section.}\label{fig:bu_5112_kry}
\end{figure}
In the third example we consider an approximation to the one-dimensional viscous Burgers equation
\begin{align*}
\frac{\partial}{\partial t} w(x,t) + w(x,t) \frac{\partial}{\partial x} w(x,t) &= \nu \frac{\partial^2}{\partial x^2}w(x,t)   \qquad\quad &&(x,t)\in(0,1)\times(0,T)\\
w(x,0)&=w_0(x) && x\in(0,1)\\
w(0,t)&=u(t) && t \in (0,T)\\
w(1,t)&=0 && t \in (0,T),
\end{align*}
%Spatial discretization + Carleman bilinearization for 
%y_t + y*y_x-nu*y_xx = 0 in (0,1)x(0,T)
%y(0,t)=u(t) for t in (0,T)
%y(T,t)=0 for t in (0,T)
where $\nu=0.1$ is constant. The solution $w(x,t)$ can be interpreted as a velocity and the equation occurs in, e.g., modelling of gas or traffic flow. A control input $u(t)$ is applied at the left boundary. The problem is discretized in space using centered finite differences with 71 uniformly distributed grid points. Using a second order Carleman bilinearization, we obtain a bilinear control system approximation with $A,N\in\RR^{5112\times 5112}$ and $B\in\RR^{5112}$, see \cite{Breiten:2010:Krylov} for further details. 
% % For this system we define the controllability Gramian which we try to compute, and in the following example the problem is discretized so that the bilinear system has the dimension
Note that in this case $A$ is an asymptotically stable but non-symmetric matrix. In order to ensure the positive semidefiniteness of the Gramian, we scale the control matrices $N$ and $B$ with a factor $\alpha=0.25$. We emphasize that the control law is scaled proportionally with $\frac{1}{\alpha}$ such that the dynamics remain unchanged, for further discussion see \cite[Section~3.4]{DammBenner}.

The comparison is similar to the previous examples and the Figures~\ref{fig:bu_5112_comp} and \ref{fig:bu_5112_kry} are analogous to the respective Figures~\ref{fig:heat_5041_comp} and \ref{fig:heat_5041_kry}. The problem is difficult in the sense that the singular values of the solution decays slowly. Moreover, the direct method stagnates at a relative residual norm of $5\cdot10^{-6}$. This is, however, less visible compared to the previous example since in general the convergence is slower.

For this example the performance of BIRKA is not significantly better than other methods, which is not surprising since the theoretical justifications for the method are not valid. ALS shows faster convergence in relative residual norm but slower convergence in relative error, as well as indications of stagnation. However, the theoretical justifications for ALS are also not valid for this problem and the result is in line with the results in \cite{Kressner:2015:Truncated}. Regarding the rational-Krylov-type methods it seems as if method D and B has good performance, and method E is not working for this problem either.
%, followed by method A with descent performance. Also for this problem, method E is not working.

\section{Conclusions and outlooks}\label{sect:concl}
In this paper we have studied iterative methods for computing approximations to the generalized Lyapunov equation. 
The methods have been studied from an energy-norm perspective, as well as a model-reduction perspective, with connections made in between. 
Common for all methods studied is that they use the current residual in the iterations. Computing the residual can in itself be costly for a truly large scale problem, although approximate dominant directions can be computed in an iterative fashion, resulting in an inner-outer-type iteration. However, more research is needed to understand the consequences of such inexact subspaces. Moreover, we have proposed a rational-Krylov-type subspace for solving the generalized Lyapunov equation. Simulations indicate competitive performance, at least in the non-symmetric case where optimality statements for the other methods are no longer valid. Simulations show that methods~A and~F do good or decently good for all three examples. The ALS iteration, as well as results from the literature, cf. \cite{Ahmad2017}, 
seems to indicate that subspaces of the type $(A-\sigma I -\mu N)^{-1}B$ could be useful. Although we have not been able to exploit this efficiently. More research is needed to understand the theoretical aspects of the suggested, and related, spaces.

\section*{Acknowledgment}
This research started when the second author visited the first author at the Karl-Franzens-Universit{\"a}t in Graz; the kind hospitality was greatly appreciated. The visit was made possible due to the generous support from the European Model Reduction Network (COST action TD1307, STSM grant 38025).

% BibTeX users please use one of
%\bibliographystyle{spbasic}      % basic style, author-year citations
%\bibliographystyle{spmpsci}      % mathematics and physical sciences
%\bibliography{}   % name your BibTeX data base

\bibliographystyle{spmpsci}
\bibliography{manuscript}

%% Non-BibTeX users please use
%\begin{thebibliography}{}
%%
%% and use \bibitem to create references. Consult the Instructions
%% for authors for reference list style.
%%
%\bibitem{RefJ}
%% Format for Journal Reference
%Author, Article title, Journal, Volume, page numbers (year)
%% Format for books
%\bibitem{RefB}
%Author, Book title, page numbers. Publisher, place (year)
%% etc
%\end{thebibliography}

\end{document}